\DeclareSymbolFont{cyrletters}{OT2}{wncyr}{m}{n}
\DeclareMathSymbol{\Sha}{\mathalpha}{cyrletters}{"58}
\definecolor{refkey}{rgb}{1,1,1}
\definecolor{labelkey}{rgb}{1,1,1}
\definecolor{cite}{rgb}{0.9451,0.2706,0.4941}
\definecolor{ruri}{rgb}{0.0078,0.4022,0.8010}
\makeindex \setcounter{tocdepth}{1}
\def\F{{\rm \mathbb{F}}}
\def\Z{{\rm \mathbb{Z}}}
\def\Q{{\rm \mathbb{Q}}}
\def\R{{\rm \mathbb{R}}}
\def\P{{\rm \mathbb{P}}}
\def\p{{\rm \mathfrak{p}}}
\def\m{{\rm \mathfrak{m}}}
\def\O{{\rm \mathcal{O}}}
\def\Nm{{\rm Nm}}
\def\Aut{{\rm Aut}}
\def\Cl{{\rm Cl}}
\def\Disc{{\rm Disc}}
\def\Gal{{\rm Gal}}
\DeclareMathOperator\rank{{rank}}
\def\End{{\rm End}}
\def\Sel{{\rm Sel}}
\def\tr{{\rm tr}}
\numberwithin{equation}{section}
\newtheorem{theorem}{Theorem}[section]
\newtheorem{lemma}[theorem]{Lemma}
\newtheorem{corollary}[theorem]{Corollary}
\newtheorem{proposition}[theorem]{Proposition}
\DeclareMathOperator{\Jac}{Jac}
\begin{document}
\title{Rank stability in quadratic extensions 
and \\Hilbert's tenth problem for the ring of integers of a number field 
}

\author{Levent Alp\"oge, Manjul Bhargava, Wei Ho, and Ari Shnidman}

\date{}

\makeatletter
\DeclareRobustCommand{\pmods}[1]{\mkern4mu({\operator@font mod}\mkern6mu#1)}
\makeatother

\maketitle

\begin{abstract}
We show that for any quadratic extension of number fields $K/F$, there exists an abelian variety $A/F$ of positive rank whose rank does not grow upon base change to $K$. 
This result implies that Hilbert's tenth problem over the ring of integers of any number field has a negative solution. That is, for the ring $\O_K$ of integers of any number field $K$, there does not exist an algorithm that answers the question of whether a polynomial equation in several variables over~$\O_K$ has solutions in~$\O_K$. 
\end{abstract}

\makeatletter
\makeatother

\section{Introduction}

In his celebrated 1900 address to the International Congress of Mathematicians, Hilbert~\cite{hilbertsproblems} discussed 23 major open problems of importance for 20th century mathematics.  In particular, his Problem \#10 called for an algorithm to determine whether an integer polynomial equation in several variables has solutions in~the integers. 

Hilbert's 10th problem was answered in the negative by the work of Davis, Putnam, and Robinson~\cite{davisputnamrobinson} and Matiyasevich~\cite{MR258744}. The question has since remained whether Hilbert's 10th problem still has a negative answer when the ring of rational integers is replaced by the ring of integers in some number field.

Poonen \cite{poonen-rankstability} and independently Cornelissen, Pheidas, and Zahidi \cite{CornelissenPheidasZahidi} proved that for a number field $K$, if there exists an elliptic curve defined over $\Q$ with $\rank E(\Q) = \rank E(K) = 1$, then Hilbert's 10th problem has a negative answer for $O_K$. 
This was strengthened independently by Poonen 
and by Shlapentokh \cite{Shlapentokh-rankstability} to show that the latter condition can be replaced by $\rank E(\Q) = \rank E(K) >0$. Shlapentokh \cite[Section 8]{MazurRubinInventiones2010} also showed that if, for every cyclic extension $K/F$ of prime degree, there exists an elliptic curve $E/F$ such that $\rank E(F) = \rank E(K) >0$, then Hilbert's tenth problem has a negative solution for the ring of integers of every number field. This was later further strengthened by Shlapentokh \cite[Theorems 3.1 and 4.8]{MazurRubinShlapentokh} to show that if, for every {\it quadratic} extension $K/F$ of number fields, there exists an {\it abelian variety} $A/F$ such that $\rank A(F) = \rank A(K) >0$, then Hilbert's tenth problem has a negative solution for the ring of integers of every number field. 

It is the hypothesis of the latter result of Shlapentokh that we prove in this paper: 

\begin{theorem}\label{thm: main}
 Let $K/F$ be any quadratic extension of number fields.
 Then there exists an abelian variety $A/F$ such that $\rank A(F) = \rank A(K) >0$. 
\end{theorem}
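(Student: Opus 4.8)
The plan rests on the standard behaviour of Mordell--Weil ranks under quadratic base change. Let $\chi=\chi_{K/F}$ be the quadratic character cutting out $K$, and for an abelian variety $A/F$ let $A^{\chi}$ denote its quadratic twist by $\chi$. Since $\Res_{K/F}(A_K)$ is isogenous over $F$ to $A\times A^{\chi}$ and has group of $F$-points $A_K(K)=A(K)$, one obtains $\rank A(K)=\rank A(F)+\rank A^{\chi}(F)$. Hence Theorem~\ref{thm: main} is equivalent to exhibiting a single abelian variety $A/F$ with $\rank A(F)\ge 1$ and $\rank A^{\chi}(F)=0$; for then $\rank A(F)=\rank A(K)>0$. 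I would produce such an $A$ as a member of a large family, forcing the first condition on a sublocus of density one and the second on a sublocus of positive density, so that any point of their (nonempty) intersection does the job.

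For the family I would use abelian varieties carrying a marked $F$-rational point that is generically of infinite order: for instance Jacobians of genus-$2$ curves $C/F$ (or Prym varieties of suitable double covers) equipped with two marked $F$-rational points, so that the class of their difference lies in $\Jac(C)(F)$. Ordering the family by height, ``the marked class has order $n$'' is, for each $n$, a proper closed condition on the parameters, and torsion orders are uniformly bounded, so the marked class is of infinite order outside a set of density zero; hence $\rank\Jac(C)(F)\ge 1$ for all but a density-zero subfamily. Crucially the marked points do not survive the twist: $\Jac(C)^{\chi}$ is the Jacobian of the quadratically twisted curve $C^{\chi}$ (the hyperelliptic involution realises $-1$ on $\Jac$), and a generic member of the twisted family has no rational point of the relevant shape, so this step imposes no constraint working against the second.

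The heart of the matter is then to bound, on average over the family, the size of the $2$-Selmer group of the twisted abelian variety $\Jac(C^{\chi})/F$. Reparametrising by the twisted curves, this amounts to averaging $\#\Sel_2$ over the family of genus-$2$ curves $Y^2=g(x)$ cut out by the condition that $g$ take values in the square class of $d$ (where $K=F(\sqrt{d})$) at the two marked arguments; introducing square-root coordinates turns this constraint into a clean homogeneously expanding region, inside which one counts integral points of the representation governing $2$-descent, over $\calO_F$, in the spirit of the geometry-of-numbers work of Bhargava and his collaborators. If this average can be kept small enough, then -- via the finer distributional information the count provides, or via a parity argument -- a positive proportion of members have $\Sel_2(\Jac(C^{\chi})/F)$ trivial (a generic member has $\Jac(C^{\chi})[2](F)=0$ and carries no systematic Selmer class), and hence satisfy $\rank\Jac(C^{\chi})(F)=0$ by the inequality $\rank\le\dim_{\F_2}\Sel_2$. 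Intersecting with the density-one locus of the previous paragraph produces a nonempty set of $A=\Jac(C)$ with $\rank A(F)\ge 1$ and $\rank A^{\chi}(F)=0$, which is what we need.

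The main obstacle is exactly this Selmer-average input over an arbitrary number field: carrying the geometry-of-numbers machinery -- construction and truncation of fundamental domains, uniformity of the error terms, the sieve to square-class and local conditions -- from $\Q$ to a general $\calO_F$; keeping the resulting average small enough, \emph{uniformly in $d$}, that the trivial Selmer group is attained with positive density (this is where the choice of family and its dimension enter, since for elliptic curves the analogous average need not be small enough over number fields with large unit or class groups); and checking that no local density degenerates for any particular quadratic extension $K/F$ and that the family has enough members of bounded height over $F$. The remaining ingredients -- the rank decomposition, the infinite-order specialization, and the routine reductions -- are then standard.
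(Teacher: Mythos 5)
Your opening reduction is the same as the paper's: using $\Res_{K/F}(A_K)\sim A\times A^{\chi}$ to get $\rank A(K)=\rank A(F)+\rank A^{\chi}(F)$, so that it suffices to find one $A/F$ with $\rank A(F)>0$ and $\rank A^{\chi}(F)=0$. The gap is that everything after this reduction is left resting on an input that is not known and that you yourself flag as ``the heart of the matter'': a geometry-of-numbers average bound for $\#\Sel_2$ of (twisted) genus-$2$ Jacobians over an \emph{arbitrary} number field $F$, restricted to the thin subfamily cut out by square-class value conditions at the marked points, strong enough and \emph{uniform in the quadratic extension} $K=F(\sqrt d)$ to yield a positive proportion of members with trivial $2$-Selmer group. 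No such theorem exists in the literature, and proving one is a major open analytic problem (indeed, as you note, the naive elliptic-curve analogue can fail to be small enough over number fields); so the proposal is a programme, not a proof. There is also a secondary soft spot: your two conditions are established on loci in \emph{differently parametrized} families (the infinite-order marked point on the original family ordered by height, the trivial Selmer group on the reparametrized twisted family with value constraints), and ``density one intersect positive density is nonempty'' needs these densities to refer to compatible orderings; and the step from a small Selmer average to a positive proportion of \emph{trivial} Selmer groups itself needs either equidistribution of parities or finer moment information, which is again unproven in this setting.

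For contrast, the paper gets the two required conditions unconditionally and without any averaging. It works with the curves $C_n\colon y^2=x^\ell+n$ (after replacing $F,K$ by $F(\zeta_\ell),K(\zeta_\ell)$ via Weil restriction), whose Jacobians $J_n$ have CM by $\Z[\zeta_\ell]$, and uses the $(1-\zeta)$-isogeny Selmer group rather than $\Sel_2$. A theorem of Yu (following Mazur--Rubin's controlled-twisting method) gives a single $\ell$-ic twist $J_{q^\ell r^2}$ with trivial $\phi$-Selmer group, hence rank $0$; the key structural point is that primes inert or ramified in $F(\sqrt q)$ are \emph{silent} (they impose no local condition), so further twisting by $\Sigma$-units $t$ congruent to $\ell$-th powers at the finitely many bad split primes leaves the Selmer group trivial. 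The positive-rank condition on the other quadratic twist $J_{r^2t^2}$ is then produced explicitly: a three-primes theorem over number fields (Vinogradov/Mitsui, or Kai's Green--Tao--Ziegler analogue) solves the $\Sigma$-unit equation $a+2rb=1$, and the twisted Fermat curve $ax^\ell+2rby^\ell=z^\ell$ maps to $C_{r^2a^{\ell-1}b^2}$, giving a rational point that is non-torsion for all but finitely many classes $t_{a,b}$. This is why the paper's proof closes, while yours would only close after solving an averaging problem that is at least as hard as the theorem itself.
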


\pagebreak 

\begin{corollary}\label{cor: main}
Let $K$ be any number field.  Then $\Z$ has a diophantine model over~$\O_K$, and so 
Hilbert's tenth problem has a negative answer over $\O_K$.
That is, for the ring $\O_K$ of integers of any number field $K$, there does not exist an algorithm that answers the question of whether a polynomial equation in several variables over~$\O_K$ has solutions.
\end{corollary}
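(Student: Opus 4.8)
The plan is to derive Corollary~\ref{cor: main} as a formal consequence of Theorem~\ref{thm: main}, the conditional theorem of Shlapentokh \cite[Theorems 3.1 and 4.8]{MazurRubinShlapentokh}, and the classical undecidability of Hilbert's tenth problem over $\Z$ due to Davis, Putnam, Robinson \cite{davisputnamrobinson} and Matiyasevich \cite{MR258744}. Recall that a diophantine model of $\Z$ over $\O_K$ consists of a diophantine subset $S \subseteq \O_K^{\,k}$ together with a bijection $\phi \colon \Z \to S$ for which the graphs of addition and multiplication, transported to $S$ via $\phi$, are again diophantine over $\O_K$. The point of such a model is that it converts any polynomial system over $\Z$ into a polynomial system over $\O_K$ that has a solution if and only if the original does; since the DPRM theorem asserts that no algorithm decides solvability over $\Z$, the existence of a diophantine model of $\Z$ over $\O_K$ immediately rules out such an algorithm over $\O_K$ as well. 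Thus it suffices to produce, for each number field $K$, a diophantine model of $\Z$ over $\O_K$.

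This is exactly what Shlapentokh's conditional result provides, given the appropriate supply of abelian varieties. First I would verify that Theorem~\ref{thm: main} supplies precisely the input those theorems require: for every quadratic extension $K/F$ of number fields, an abelian variety $A/F$ with $\rank A(F) = \rank A(K) > 0$. Conceptually, the rank-stability hypothesis is used as follows (following Poonen \cite{poonen-rankstability}). Choosing a point $P \in A(F)$ of infinite order, the map $n \mapsto nP$ embeds $\Z$ into $A(F) \subseteq A(K)$; because the group law on $A$ is given by polynomial equations, the additive structure of the index is visible diophantinely, while the equality $\rank A(F) = \rank A(K)$ is what allows the image $\{nP : n \in \Z\}$ to be cut out as a diophantine subset of $A(K)$ rather than merely of $A(F)$. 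This yields a diophantine definition of $\O_F$ inside $\O_K$; chaining such definitions along a tower of quadratic steps, and terminating at base fields over which $\Z$ is already known to be diophantine by the work of Denef, Denef--Lipshitz, Pheidas, and Shlapentokh, produces the desired diophantine model of $\Z$ over $\O_K$.

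Since Theorem~\ref{thm: main} is the substantive content, the remaining work is essentially bookkeeping, and the only genuine subtlety I anticipate is confirming that the universal form of the hypothesis --- an abelian variety for \emph{every} quadratic extension, with no constraint on the base field $F$ --- is exactly strong enough to drive Shlapentokh's inductive reduction to \emph{all} number fields $K$, including those not reachable from $\Q$ by a $2$-power tower. I would therefore read \cite[Theorems 3.1 and 4.8]{MazurRubinShlapentokh} carefully to check that the statement of Theorem~\ref{thm: main} matches their hypotheses verbatim --- in particular that positivity of the common rank, rather than its exact value, is what is required --- after which the corollary follows at once.
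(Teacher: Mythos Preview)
Your proposal is correct and follows exactly the paper's approach: the paper derives Corollary~\ref{cor: main} immediately from Theorem~\ref{thm: main} together with Shlapentokh's conditional result \cite[Theorems 3.1 and 4.8]{MazurRubinShlapentokh}, with the DPRM theorem implicit in the statement about diophantine models. The paper's own deduction is in fact terser than yours---it is a single sentence at the end of \S\ref{sec: final}---so your additional discussion of the mechanism (diophantine models, the role of rank stability, the tower reduction) is expository rather than logically necessary, but entirely accurate.
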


Mazur and Rubin \cite{MazurRubinInventiones2010} showed that Corollary \ref{cor: main} follows from the conjectural finiteness of the Tate-Shafarevich group; see also the work of Murty and Pasten~\cite{MurtyPasten} and Pasten \cite{Pasten2023} for two other conditional results. Unconditional proofs for particular classes of number fields were given by a number of authors over the years, including Denef \cite{Denef75, Denef80}, Denef and Lipshitz \cite{DenefLipshitz}, Pheidas \cite{Pheidas}, Shlapentokh \cite{Shlapentokh-Extension}, Videla \cite{Videla}, Shapiro--Shlapentokh \cite{ShapiroShlapentokh},  Garcia-Fritz--Pasten \cite{GFP2020}, Shnidman--Weiss \cite{ShnidmanWeissRankGrowth}, and Kundu--Lei--Sprung \cite{KunduLeiSprung}.  
In a beautiful recent preprint, Koymans and Pagano~\cite{KoymanPagano} give an independent unconditional proof of~Corollary~\ref{cor: main}, via a slightly different version of Theorem \ref{thm: main}, using  different methods. However, it is interesting to note that both their proof and ours make use of results in additive combinatorics in number fields, such as the recent work of Kai~\cite{WataruKai2023}, though in rather different ways.\footnote{In particular, the case of linear equations in {three} primes that we use may be considered ``classical'', in that it can also be deduced from the methods of Vinogradov~\cite{vinogradov}, van der Corput~\cite{vanderCorput}, and Mitsui~\cite{Mitsui,mitsui-siegel-walfisz}.}

Finally, we remark that the~methods~of Koymans and Pagano~\cite{KoymanPagano} involve $2$-Selmer groups of elliptic curves with full 2-torsion over $F$, while our proof uses $\ell$-isogeny Selmer groups of Jacobians $J_n$ of the hyperelliptic curves $C_n:y^2 = x^\ell + n$. 

\vspace{.095in}

We now describe the ideas behind our proof more precisely. Let $\ell$ be an odd prime that is unramified in $K=F(\sqrt{q})$, and let $\zeta$ denote a primitive $\ell$-th root of unity lying in an extension field of~$K$. 
If $A/F(\zeta)$ is an abelian variety with $\rank A(F(\zeta)) = \rank A(K(\zeta)) >0$, then we also have $\rank A'(F) = \rank A'(K) >0$, where $A'$ is the Weil restriction of $A$ from $F(\zeta)$ to $F$. 
Thus, for the purposes of proving Theorem~\ref{thm: main}, by replacing $K$ and $F$ by $K(\zeta)$ and $F(\zeta)$, respectively, we may assume without loss of generality that $\zeta\in F$. 

We consider the family of curves $C_n : y^2 = x^\ell + n$ where $n\in F$. Their Jacobians $J_n$ admit complex multiplication by $\zeta$ over $F$, and our tool for studying the ranks of these Jacobians $J_n$ is the $(1 - \zeta)$-Selmer group. A key realization is that the $(1-\zeta)$-Selmer groups of the abelian varieties $J_n$ admit a large set $\Sigma$ of {\it silent primes}, in the terminology of Mazur and Rubin~\cite{MazurRubinLarsen}; these are the primes of $F$ that are inert or ramified in $F(\sqrt n)$, and these primes remain silent in that they impose no local conditions in the definition of the said $(1-\zeta)$-Selmer group (see Lemma \ref{lem: inert primes are silent}). 

Using the methods of Mazur--Rubin~\cite{MazurRubinInventiones2010} and Yu~\cite{Yu2016}, we choose $n=q^\ell r^2$ so that the Jacobian variety $J_n$ has $(1-\zeta)$-Selmer rank $0$ and thus rank $0$. We may then replace $n$ by $nt^2$, with $t$ an arbitrary product of silent primes satisfying a congruence condition modulo a constant depending only on $n$, and the $(1-\zeta)$-Selmer group of $A:=J_n$ will not change (and thus will still vanish!).

This great flexibility in choosing $t$ and thus $n=q^\ell r^2 t^2$ allows us to {\it simultaneously} also produce a non-torsion rational point on the $q$-quadratic twist $A^q=J_{r^2t^2}$ of $A=J_{q^\ell r^2 t^2}$, by solving the $\Sigma$-unit equation $a + 2rb = 1$. The fact that there are many solutions $(a,b)$ to this $\Sigma$-unit equation follows, for example, from a simpler version of Vinogradov's circle method approach to the ternary Goldbach problem \cite{vinogradov}, as generalized to number fields by Mitsui~\cite{Mitsui}; it also follows from the recent work of Kai~\cite{WataruKai2023}. Once we have such a solution, we obtain an $F$-rational point on the Fermat curve $ax^{\ell} + 2ry^{\ell} = 1$. The latter curve covers $C_{r^2t^2}$, where $t=a^{(\ell-1)/2}b$. In this way, we obtain an $F$-rational point on the Jacobian $A^q=J_{r^2t^2}$ that is generically non-torsion. 

We thereby conclude that 
\[\rank A^q(K) = \rank A(F) + \rank A^q(F) = \rank A^q(F) > 0,\] as desired. 


\vspace{.095in}

This article is organized as follows. In Section~\ref{sec: Selmer}, we describe the arithmetic of the curves $C:y^2=x^\ell+1$ and their twists, including the definition of the $(1-\zeta)$-Selmer group and its silent primes.  We show how to produce an $\ell$-ic twist of $C$ having rank 0 using the methods of Mazur--Rubin~\cite{MazurRubinInventiones2010} and \cite{Yu2016}, while ensuring that its further $q$-quadratic twist has a non-torsion rational point, obtained by making use of a solution to the aforementioned $\Sigma$-unit equation where $\Sigma$ is the set of silent primes. In Section~\ref{sec:Sunit}, we show how to deduce the existence of infinitely many solutions to the desired $\Sigma$-unit equation. Finally, in Section~\ref{sec: final}, we combine the above ingredients together to prove Theorem~\ref{thm: main} and thus Corollary~\ref{cor: main}. 

{\begin{center}    
{\quote{{ {{
Hello darkness, my old friend\\
I've come to talk with you again\\
Because a vision softly creeping\\
Left its seeds while I was sleeping\\
And the vision that was planted in my brain\\
Still remains\\
Within the sound of silence.}}}
\vspace{.1in}\\
-- Simon and Garfunkel
}}
\end{center}}

\section{The curve $y^2 = x^\ell + 1$ and its twists}\label{sec: Selmer}

Let $\ell$ be an odd prime, and let $F$ be a number field containing the group $\mu_\ell$ of  $\ell$-th roots of unity. Let $\zeta \in \mu_\ell$ be a generator. For $n \in F^\times$, let $C_n$ be the smooth projective hyperelliptic curve of genus $(\ell-1)/2$ in weighted projective space $\P(1,(\ell-1)/2,1)$ having affine model $y^2 = x^\ell + n$. Note that for any $\lambda \in F^\times$, the curves $C_n$ and $C_{n\lambda^{2\ell}}$ are isomorphic.  The automorphism $(x,y) \mapsto (\zeta x, y)$ induces an action of $\mu_\ell$ on $C_n$, and hence an action of the ring $\Z[\zeta]$ on the Jacobian $J_n := \Jac(C_n)$.  

It will be important for us that the varieties $C_n$ and $J_n$ admit both quadratic twists and $\mu_\ell$-twists. For a quadratic extension $K = F(\sqrt{q})$ of $F$, the $K$-quadratic twist $C_n^K$ of $C_n$ is the curve $qy^2 = x^\ell + n$, which is also isomorphic to $C_{q^\ell n}$. Similarly, the $K$-quadratic twist $J^K_n$ of $J_n$ is isomorphic to $J_{q^\ell n}$. The $\mu_\ell$-twists of $C_n$ are the curves  $C_{nr^2}$, for $r \in F^\times$, as $C_{nr^2}$ becomes isomorphic to $C_n$ over $F(r^{1/\ell})$, and the $\mu_\ell$-twists of $J_{n}$ are the Jacobians $J_{nr^2}$.

\subsection{$(1-\zeta)$-Selmer groups}

Via the embedding $\Z[\zeta] \hookrightarrow \End(J_n)$, we view $1 - \zeta$ as a self-isogeny $\phi \colon J_n \to J_n$ of degree $\ell$. We study the corresponding Selmer groups $\Sel_\phi(J_n)$ below; see \cite{PoonenSchaeffer} for further background on these.

We embed $C \hookrightarrow J$ via $P \mapsto P - \infty$, using the base point $\infty = [1 \colon 0 \colon 0] \in C_n(F)$. The one-dimensional $\F_\ell$-vector space $J_n[\phi](\overline{F})  = \ker(\phi)(\overline{F})$ is generated by the point $(0,\sqrt{n})$, and the Galois action  $\Gal(\overline{F}/F) \to \Aut(J_n[\phi](\overline{F})) \simeq \F_\ell^\times$ on $J_n[\phi]$ is the quadratic character cutting out $ F(\sqrt{n})/F$.  

Set $T := H^1(F, J_n[\phi])$. For a prime $\p$, let $F_\p$ be the corresponding completion of $F$, and set  $T_\p := H^1(F_\p, J_n[\phi])$. Let $W_\p \subset T_\p$ be the image of the boundary map $J_n(F_\p) \to T_\p$ coming from the short exact sequence
\begin{equation}\label{eq: ses}
0 \to J_n[\phi] \to J_n \stackrel{\phi}{\longrightarrow} J_n \to 0.
\end{equation} 
The $\phi$-Selmer group $\Sel_\phi(J_n)$ is defined to be the kernel of the homomorphism
$$T \to \prod_\p T_\p/W_\p$$ induced by the product of restriction maps $T \to T_\p$. Note that we have ignored the archimedean places, since $F$ is totally complex.

\begin{lemma}\label{lem: Selmer 0 implies rank 0}
    If $\Sel_\phi(J_n) = 0$, then $\rank J_n(F) = 0$.
\end{lemma}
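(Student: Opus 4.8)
The plan is to run the standard descent argument relating the $\phi$-Selmer group to the Mordell--Weil group, exploiting that $\phi = 1-\zeta$ and its "complementary" isogeny $\hat\phi$ satisfy $\hat\phi \circ \phi = [\ell]$ (up to a unit of $\Z[\zeta]$), so that controlling both $\Sel_\phi(J_n)$ and $\Sel_{\hat\phi}(J_n)$ controls $\Sel_\ell(J_n)$ and hence $J_n(F)\otimes \F_\ell$. Concretely, first I would recall the exact sequence
\[
0 \to J_n(F)/\phi J_n(F) \to \Sel_\phi(J_n) \to \Sha(J_n)[\phi] \to 0,
\]
coming from the short exact sequence \eqref{eq: ses}, which immediately gives that $\Sel_\phi(J_n)=0$ forces $J_n(F) = \phi J_n(F)$.

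Next I would iterate: since $\phi = 1-\zeta$ and $1-\zeta$ generates the unique prime of $\Z[\zeta]$ above $\ell$, the map $J_n(F) \to J_n(F)$, $P \mapsto \phi P$, being surjective, implies $J_n(F) = \phi^k J_n(F)$ for all $k \geq 1$, hence $J_n(F) = \ell J_n(F)$ (using $(1-\zeta)^{\ell-1}$ differs from $\ell$ by a unit in $\Z[\zeta]$, and $\Z[\zeta]\subseteq \End(J_n)$ acts on $J_n(F)$). A finitely generated abelian group $G$ with $G = \ell G$ must be finite (its free part would otherwise obstruct divisibility), so by the Mordell--Weil theorem $J_n(F)$ is finite, i.e.\ $\rank J_n(F) = 0$.

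The main obstacle — really the only substantive point — is justifying that surjectivity of a single isogeny $\phi$ on $F$-points propagates to surjectivity of $[\ell]$. I'd handle this purely at the level of the finitely generated $\Z[\zeta]$-module $M := J_n(F)$: from $M = \phi M$ one gets $M = \phi^{\ell-1} M = u\,[\ell] M$ for a unit $u \in \Z[\zeta]^\times$, and since multiplication by a unit is an automorphism of $M$ as an abelian group, $M = [\ell]M = \ell M$. Alternatively, and perhaps more cleanly, one can argue rank-wise: if $\rank J_n(F) = \rho > 0$, then $\dim_{\Q(\zeta)} \bigl(J_n(F)\otimes_{\Z}\Q\bigr)$ is some positive integer, and $\phi$ acts on this $\Q(\zeta)$-vector space as multiplication by the nonzero scalar $1-\zeta$, hence invertibly — contradicting nothing by itself, so one must instead pass to $J_n(F)\otimes_{\Z[\zeta]} \F_\ell = M/\phi M$, which vanishes by the Selmer hypothesis, forcing $M\otimes_{\Z[\zeta]}\F_\ell = 0$ and therefore $M$ torsion by Nakayama over the local ring $\Z[\zeta]_{(1-\zeta)}$ together with finite generation. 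Either route is short; I would present the divisibility version as it avoids invoking Nakayama. No archimedean or local subtleties enter, since the only input used is the global descent sequence plus finite generation of $J_n(F)$.
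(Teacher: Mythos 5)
Your proof is correct and follows the same route as the paper: both rest on the descent injection $J_n(F)/\phi J_n(F) \hookrightarrow \Sel_\phi(J_n)$ coming from the long exact sequence of \eqref{eq: ses}, after which the paper simply cites the standard inequality $\rank_{\Z[\zeta]} J_n(F) \leq \dim_{\F_\ell}\Sel_\phi(J_n)$. Your divisibility argument (using $(1-\zeta)^{\ell-1} = u\ell$ with $u \in \Z[\zeta]^\times$ together with Mordell--Weil finite generation) is just a correct hands-on verification of that standard inequality, so no essential difference.
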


\begin{proof}
The long exact sequence associated to $(\ref{eq: ses})$ gives $J_n(F)/\phi J_n(F) \hookrightarrow \Sel_\phi(J_n)$, hence the usual Selmer group inequality $\rank_{\Z[\zeta]} J_n(F) \leq  \dim_{\F_\ell}\Sel_\phi(J_n)$. 
\end{proof}

\subsection{Silent primes}

Crucial to our method is the presence of so-called {\it silent primes}, i.e., primes $\p$ with $T_\p = 0$. 
\begin{lemma}\label{lem: inert primes are silent}
    Suppose $\p \nmid \ell$ is inert or ramified in $F(\sqrt{n})/F$. Then $T_\p = 0$. 
\end{lemma}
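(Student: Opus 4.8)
The plan is to show that $H^1(F_\p, J_n[\phi]) = 0$ by identifying $J_n[\phi]$ as an explicit $\Gal(\overline{F_\p}/F_\p)$-module and invoking the standard cohomology vanishing for a nontrivial character of a finite cyclic group. Recall from the preceding discussion that $J_n[\phi](\overline{F})$ is a one-dimensional $\F_\ell$-vector space on which $\Gal(\overline{F}/F)$ acts through the quadratic character $\chi_n$ cutting out $F(\sqrt{n})/F$. Restricting to $F_\p$, the action of $G_\p := \Gal(\overline{F_\p}/F_\p)$ on $J_n[\phi]$ factors through $\Gal(F_\p(\sqrt{n})/F_\p)$, and the hypothesis that $\p$ is inert or ramified in $F(\sqrt{n})/F$ means precisely that $F_\p(\sqrt{n})/F_\p$ is a nontrivial quadratic extension, so $\chi_n|_{G_\p}$ is a nontrivial character of order $2$.

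First I would set $L := F_\p(\sqrt{n})$, so $\Gal(L/F_\p) \simeq \Z/2\Z$ acts on the $1$-dimensional $\F_\ell$-space $M := J_n[\phi]$ by $-1$ (the nontrivial scalar, which makes sense as $\ell$ is odd). The inflation-restriction exact sequence for $G_L \triangleleft G_p$ reads
\[
0 \to H^1(\Gal(L/F_\p), M^{G_L}) \to H^1(F_\p, M) \to H^1(L, M)^{\Gal(L/F_\p)}.
\]
Since $G_L$ acts trivially on $M$, we have $M^{G_L} = M$, but $H^1(\Z/2\Z, M) = \widehat{H}^0(\Z/2\Z, M)$-type computation gives $H^1(\Z/2\Z, M) = 0$ because multiplication by $2$ is invertible on the $\ell$-torsion group $M$ (standard: the cohomology of a finite group is killed by its order, and here $|\Z/2\Z| = 2$ is invertible on $M$). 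So the left term vanishes. For the right term, $H^1(L, M) = \Hom(G_L, M)$ since $G_L$ acts trivially on $M$, and this is $\Hom(G_L^{\mathrm{ab}}/\ell, \F_\ell)$; I would then observe that the $\Gal(L/F_\p)$-invariants of this, together with the fact that the only way a class in $H^1(F_\p, M)$ maps in, must actually vanish — more cleanly, since the left and middle terms force $H^1(F_\p, M)$ to inject into $H^1(L,M)^{\Gal(L/F_\p)}$, and a direct restriction-corestriction argument shows any class is killed: $\mathrm{cor} \circ \mathrm{res} = 2$ is an isomorphism on $H^1(F_\p, M)$, so $\mathrm{res}$ is injective, but also $\mathrm{res}$ followed by $\mathrm{cor}$ being multiplication by $2$ forces the image to be controlled. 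The cleanest route is simply: $\mathrm{res}\colon H^1(F_\p, M) \to H^1(L, M)$ has kernel killed by $2$ and $H^1(F_\p,M)$ is $\ell$-torsion, so $\mathrm{res}$ is injective; and then one shows the image lands in the part where corestriction (= multiplication by $2$ after res, i.e. the norm) acts invertibly yet the norm map $N = 1 + \sigma = 1 + (-1) = 0$ on $M$, forcing the image to be zero.

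Actually the most economical argument avoids inflation-restriction entirely: since $2$ is invertible on the $\F_\ell$-module $M$, the restriction-corestriction formula $\mathrm{cor}_{L/F_\p} \circ \mathrm{res}_{L/F_\p} = [L : F_\p] = 2$ shows $\mathrm{res}$ is injective on $H^1(F_\p, M)$; and the composite the other way $\mathrm{res} \circ \mathrm{cor} = \sum_{g \in \Gal(L/F_\p)} g = 1 + \sigma$, which acts on $M$ as $1 + (-1) = 0$. Combining: for $c \in H^1(F_\p, M)$, we have $2c = \mathrm{cor}(\mathrm{res}(c))$, and $\mathrm{res}(c) \in H^1(L, M)$ where $G_L$ acts trivially; applying $\mathrm{cor}$ lands us back with a class on which... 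The cleanest: $H^1(L, M) = \Hom(G_L, M)$, and $\mathrm{cor}\colon H^1(L,M) \to H^1(F_\p, M)$ composed with $\mathrm{res}$ back to $H^1(L,M)$ is the norm $1+\sigma = 0$; since $\mathrm{res}$ is injective on $H^1(F_\p,M)$, we get $\mathrm{cor} = 0$; hence $2 = \mathrm{cor} \circ \mathrm{res}$ is the zero map on $H^1(F_\p, M)$, but $2$ is invertible there, so $H^1(F_\p, M) = 0$. This is $T_\p = 0$.

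The main obstacle is bookkeeping rather than conceptual: one must be careful that the Galois action is through the quadratic character and not something larger (this uses $\p \nmid \ell$ only insofar as it guarantees $J_n[\phi]$ is unramified away from primes dividing $n\ell$ — but in fact the identification of the action as exactly $\chi_n$ is already granted in the text, so the hypothesis $\p \nmid \ell$ is needed only to ensure we are not in a degenerate situation where $F(\sqrt n)/F$ could behave unexpectedly; in the inert/ramified case the extension $F_\p(\sqrt n)/F_\p$ is genuinely quadratic regardless). Everything then reduces to the triviality that $H^*(\Z/2\Z, M) = 0$ for an $\ell$-torsion module $M$ with $\ell$ odd, packaged via restriction-corestriction.
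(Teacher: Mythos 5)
There is a genuine gap, and it sits exactly at the step you call the ``cleanest route.'' The formula $\mathrm{res}\circ\mathrm{cor}=\sum_{g\in\Gal(L/F_\p)}g_*$ involves the action of $\sigma$ on the cohomology group $H^1(L,M)$, not merely on the coefficient module $M$: on $H^1(L,M)=\Hom(G_L,M)$ one has $(\sigma f)(h)=\sigma\cdot f(\sigma^{-1}h\sigma)$, so the conjugation action on $G_L$ enters as well, and $1+\sigma$ is \emph{not} the scalar $1+(-1)=0$. Indeed, on the image of $\mathrm{res}$ the action of $\sigma$ is trivial, so there $1+\sigma$ acts as multiplication by $2$, which is consistent with $\mathrm{res}\circ\mathrm{cor}\circ\mathrm{res}=2\,\mathrm{res}$ and shows your claimed identity ``$\mathrm{res}\circ\mathrm{cor}=0$'' cannot hold whenever $H^1(F_\p,M)\neq 0$. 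A concrete group-theoretic counterexample to the purely formal statement you are invoking: for $G$ the dihedral group $\Z/2\ltimes\Z/\ell$ and $\chi$ its quadratic character, $H^1(G,\F_\ell(\chi))\cong\F_\ell\neq 0$, even though $\chi$ is a nontrivial quadratic character and $2$ is invertible on the module. So ``$M$ is a nontrivial quadratic twist of $\F_\ell$'' alone does not force $H^1$ to vanish; some arithmetic input about $G_{F_\p}$ is indispensable. A decisive symptom is that your argument never genuinely uses $\p\nmid\ell$ (you even suggest it is only a non-degeneracy hypothesis), yet the lemma is false for $\p\mid\ell$: there $H^0$ and $H^2$ still vanish, but the local Euler characteristic formula gives $\dim_{\F_\ell}H^1(F_\p,J_n[\phi])=[F_\p:\Q_\ell]>0$.

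The paper's proof supplies exactly the missing arithmetic: since $\mu_\ell\subset F$, the module $J_n[\phi]$ is its own Cartier dual, so local Tate duality gives $\#H^2(F_\p,J_n[\phi])=\#H^0(F_\p,J_n[\phi])$, and the local Euler characteristic formula (where $\p\nmid\ell$ is used) yields $\#T_\p=\#H^0\cdot\#H^2=(\#J_n[\phi](F_\p))^2=1$ because $\sqrt{n}\notin F_\p$. Your inflation--restriction reduction (injectivity of $\mathrm{res}$ into $H^1(L,M)^{\sigma}$ with $L=F_\p(\sqrt n)$) is fine as far as it goes, and it could be completed by computing the twisted invariants via local class field theory, i.e.\ by showing $\sigma$ acts trivially on $L^\times/(L^\times)^{\ell}$ when the residue characteristic is prime to $\ell$, so that no nonzero $\sigma$-equivariant homomorphism to $\F_\ell(\chi)$ exists; but that is arithmetic input of the same strength as the duality/Euler-characteristic argument, and it is precisely what your write-up omits.
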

\begin{proof}
    Since $F$ contains $\mu_\ell$, we see that $J_n[\phi]$ is isomorphic to its own Cartier dual. Hence, local Tate duality gives  $H^2(F_\p, J_n[\phi]) \simeq H^0(F_\p,J_n[\phi])$, and the local Euler characteristic formula reads
    \[\#T_\p = \#H^0(F_\p, J_n[\phi])\cdot \#H^2(F_\p,J_n[\phi])= (\#J_n[\phi](F_\p))^2 = 1.\] The last equality follows because $\sqrt{n} \notin F_\p$.
\end{proof}

Note that silent primes exist if and only if $n$ is not a square in $F$. 

\subsection{Twists of rank zero}\label{subsec: rank 0} 
Suppose now that $K = F(\sqrt{q})$ is a quadratic extension of $F$ that is unramified at all primes $\p$ above $\ell$. We will consider curves of the form $ C_{q^\ell r^2}$ for $r \in \O_F$ and their Jacobians $J_{q^\ell r^2}$. 
Since $F(\sqrt{q^{\ell}r^2}) = F(\sqrt{q^\ell})$, we have $J_{q^\ell}[\phi] \simeq J_{q^\ell r^2}[\phi]$,  and we may view all Selmer groups $\Sel_\phi(J_{q^\ell r^2})$ as subspaces of the same ambient $\F_\ell$-vector space $T := H^1(F, J_{q^\ell}[\phi])$. 

Following the methods of Mazur and Rubin \cite{MazurRubinInventiones2010}, Yu~\cite{Yu2016} proved:
    \begin{lemma}[{\cite[Theorem 4]{Yu2016}}] \label{lem:yu}
        There exists $r \in \O_F$ such that $\Sel_\phi(J_{q^\ell r^2}) = 0$. Moreover, we may assume that $r\notin \p$ for all primes $\p$ that ramify in $K$. 
    \end{lemma}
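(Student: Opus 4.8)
The plan is to prove Lemma~\ref{lem:yu} by the prime‑by‑prime twisting method of Mazur--Rubin \cite{MazurRubinInventiones2010}, carried out for this particular family as in Yu \cite{Yu2016}. Work throughout inside the fixed $\F_\ell$-vector space $T = H^1(F, N)$, where $N := J_{q^\ell}[\phi]$; recall that every $\Sel_\phi(J_{q^\ell r^2})$ is the subspace of $T$ cut out by its local conditions $\{W_\p\}$. The key elementary observation is that replacing $r$ by $r\p$ changes only the single local condition at $\p$: away from $\p$ the Jacobians $J_{q^\ell r^2}$ and $J_{q^\ell(r\p)^2}$ have the same reduction (they differ by a $\mu_\ell$-twist ramified only at $\p$), so $W_v$ is unchanged for $v \neq \p$. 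It therefore suffices to establish the following \emph{descent step}: if $\Sel_\phi(J_{q^\ell r^2}) \neq 0$, then there is a prime $\p \nmid \ell q r$, split in $K$, with $\dim_{\F_\ell}\Sel_\phi(J_{q^\ell(r\p)^2}) = \dim_{\F_\ell}\Sel_\phi(J_{q^\ell r^2}) - 1$. Since $\Sel_\phi(J_{q^\ell})$ is finite, iterating the descent step starting from $r = 1$ produces $r \in \O_F$ with $\Sel_\phi(J_{q^\ell r^2}) = 0$; and since the primes $\p$ produced along the way are all split in $K$, the resulting $r$ is automatically prime to every prime ramifying in $K$, which is the last assertion of the lemma.

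First I would work out the local conditions at the twisting primes. Let $\p \nmid \ell q r$ be split in $K = F(\sqrt q)$. Then $N$ is a trivial $\Gal(\overline{F_\p}/F_\p)$-module; since $\mu_\ell \subset F$ we may identify $N \simeq \mu_\ell$, so that $T_\p = H^1(F_\p, N) \simeq F_\p^\times/(F_\p^\times)^\ell$ is two-dimensional, containing the one-dimensional unramified subspace $H^1_{\mathrm{ur}}$. Because $\p \nmid \ell$, a size count ($\#W_\p = \#J(F_\p)[\phi] = \ell$) shows $\dim W_\p = 1$ for every Jacobian in the family at $\p$; and since $J_{q^\ell r^2}$ has good reduction at $\p$, its $W_\p$ equals $H^1_{\mathrm{ur}}$. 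For the twist $J_{q^\ell(r\p)^2}$ the crucial point is that $q^\ell(r\p)^2 = q \cdot (q^{(\ell-1)/2} r\p)^2$ is a \emph{square} in $F_\p$, because $\p$ splits in $K$; consequently $C_{q^\ell(r\p)^2}$ acquires $F_\p$-points whose $x$-coordinate has valuation $2$, and running such a point through the $\phi$-descent map (given essentially by the $x$-coordinate) yields a class in $W_\p(J_{q^\ell(r\p)^2})$ with nonzero valuation component, hence not in $H^1_{\mathrm{ur}}$. Being a line distinct from $H^1_{\mathrm{ur}}$, it is transverse to it. So twisting by $\p$ replaces the local condition at $\p$ by a complementary line.

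Second, the global ingredient. The dual isogeny $\hat\phi$ (the one whose Selmer group is Poitou--Tate dual to $\Sel_\phi$) equals $1 - \zeta^{-1} = -\zeta^{-1}(1-\zeta)$, a unit multiple of $\phi$; hence $J_n[\hat\phi] = J_n[\phi]$ and the images $W_v$ of the $\hat\phi$- and $\phi$-coboundary maps coincide place by place, so that $\Sel_{\hat\phi}(J_n) = \Sel_\phi(J_n)$ as subspaces of $T$, for every $n = q^\ell r^2$. (This ``self-duality'' of the Selmer problem, carrying no parity obstruction, is the special feature of these CM Jacobians that makes Selmer rank exactly $0$, rather than some positive minimum, attainable.) Now suppose $\Sel_\phi(J_{q^\ell r^2}) \neq 0$ and pick $0 \neq c$ in it. Since $\ell$ is odd, restriction $H^1(F, N) \hookrightarrow H^1(K, N)$ is injective, so the restriction of $c$ to $\Gal(\overline F/K)$ is a nonzero homomorphism to $\F_\ell$; let $L$ be the Galois closure over $F$ of the $\F_\ell$-extension of $K$ it cuts out. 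By Chebotarev, choose a prime $\p$, unramified in $L$, split in $K$, prime to $\ell q r$, and not among the finitely many primes ramified in $K$, whose Frobenius is conjugate to a fixed $\sigma \in \Gal(L/K)$ with $c(\sigma) \neq 0$. Then $\mathrm{loc}_\p(c) \neq 0$, so $\mathrm{loc}_\p(\Sel_\phi(J_{q^\ell r^2})) = H^1_{\mathrm{ur}}$ (a nonzero subspace of the line $H^1_{\mathrm{ur}}$). Poitou--Tate duality for the Selmer group $\Sel^{(\p)}_\phi$ obtained by relaxing the condition at $\p$, combined with $\Sel_{\hat\phi} = \Sel_\phi$ and the fact that $H^1_{\mathrm{ur}}$ is its own orthogonal complement for $\p \nmid \ell$, gives $\mathrm{loc}_\p(\Sel^{(\p)}_\phi) = \big(\mathrm{loc}_\p \Sel_{\hat\phi}(J_{q^\ell r^2})\big)^{\perp} = (H^1_{\mathrm{ur}})^{\perp} = H^1_{\mathrm{ur}}$; hence $\Sel^{(\p)}_\phi = \Sel_\phi(J_{q^\ell r^2})$. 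Replacing the condition at $\p$ by the transverse line from the previous paragraph then leaves exactly $\Sel_\phi(J_{q^\ell(r\p)^2}) = \{\, c' \in \Sel_\phi(J_{q^\ell r^2}) : \mathrm{loc}_\p(c') = 0 \,\}$, of dimension one less. This is the desired descent step.

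I expect the main obstacle to be the local computation of the second paragraph --- establishing that twisting by a split prime genuinely moves $W_\p$ off the unramified line to a transverse one. This is exactly where the geometry of $C_n : y^2 = x^\ell + n$ must be used: the explicit form of the $\phi$-descent map for these CM Jacobians, together with the fact that $q^\ell$ becomes a square locally at primes split in $K$, so that local points with $x$-valuation prime to $\ell$ appear on the twist. Two further points need care, although both are soft here: the self-duality $\Sel_{\hat\phi}(J_n) = \Sel_\phi(J_n)$, which removes the parity obstruction and lets the one-step descent terminate at $0$ --- in the $2$-isogeny setting of \cite{MazurRubinInventiones2010} this is precisely where genuine subtleties arise, whereas here it is immediate from $\hat\phi$ being a unit multiple of $\phi$ --- and the elementary group theory behind the Chebotarev step, namely that the splitting condition in $K$ and the detection of $c$ are jointly realizable, which is unobstructed because $N$ has no nonzero $\Gal(\overline F/F)$-invariants ($n$ not being a square). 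Note that primes above $\ell$ never enter as twisting primes, so their more delicate local behaviour affects the argument only through the fixed self-duality.
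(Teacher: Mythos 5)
The paper does not actually reprove this statement: its proof is a two-line citation of \cite[Theorem 4]{Yu2016}, together with the remarks that Yu's Galois-group hypothesis is equivalent to $q\notin F^{\times 2}$ and that the coprimality of $r$ to the ramified primes is visible in Yu's construction. You instead set out to reprove Yu's theorem via the Mazur--Rubin twisting machinery. Your skeleton is the right one (and is indeed what underlies \cite{Yu2016}): self-duality of the local conditions because $\hat\phi=1-\zeta^{-1}$ is a unit multiple of $\phi$; a split prime $\p$ at which the twist $J_{q^\ell(r\p)^2}$ has local condition transverse to $H^1_{\mathrm{ur}}$; Chebotarev to force $\mathrm{loc}_\p(c)\neq 0$ for a chosen nonzero Selmer class; and then a drop of exactly one in the Selmer rank (your Poitou--Tate bookkeeping is slightly misquoted --- the duality statement concerns the relaxed-at-$\p$ groups --- but the conclusion $\Sel^{(\p)}_\phi=\Sel_\phi$ does follow from self-duality plus the Greenberg--Wiles formula, so this is a repairable imprecision, as is your description of the descent map, which for $y^2=x^\ell+n$ is essentially $(x,y)\mapsto y-\sqrt n$ rather than the $x$-coordinate).

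The genuine gap is the assertion that replacing $r$ by $r\p$ ``changes only the single local condition at $\p$.'' At places $v\in S$ (those above $\ell$ or of bad reduction) this is false in general: the $\mu_\ell$-twist by a generator $\pi$ of $\p$ is nontrivial over $F_v$ unless $\pi\in F_v^{\times\ell}$, and the Kummer images $W_v$ of locally non-isomorphic twists inside the common $T_v$ need not coincide --- this is exactly why Lemma~\ref{lem: Selmer rank 0} of the paper imposes $t\in F_\p^{\times\ell}$ for all $\p\in S$. To fix your descent step you must require the twisting prime to be principal with a generator lying in $F_v^{\times\ell}$ (say $\equiv 1$ modulo a high power of $v$) for every $v\in S$, where $S$ moreover grows with each iteration since each new twist acquires bad reduction at the prime just used; these are ray-class and Kummer conditions on $\p$, and you must then show they are simultaneously realizable with the condition $c(\mathrm{Frob}_\p)\neq 0$, i.e.\ that the class $c$ remains nonzero upon restriction to the compositum of $K$ with the relevant ray class field and Kummer extensions (and one must deal with the classes for which this fails). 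That compatibility argument is the technical core of \cite{MazurRubinInventiones2010} and \cite{Yu2016}, and it is absent from your sketch; without it the Chebotarev step does not deliver a prime that both leaves the conditions at $S$ unchanged and detects $c$. (Minor additional points: you also need $\p\nmid 2$ for the Hensel argument in your local computation, and the transversality at $\p$ should be verified with the correct descent map, though the conclusion there is correct.)
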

    \begin{proof}
         The first sentence is Yu's theorem, his assumption $\Gal(f) \simeq S_n$ being equivalent to our assumption that $q$ is not a square in $F^\times$. The second sentence is also clear from his proof.  
    \end{proof}
In the sequel, fix $r \in \O_F$ as in Lemma \ref{lem:yu}. We will use the following notation for special sets of primes in $F$: 
\begin{itemize}
    \item $S'$ is the set of primes $\p$ dividing either $\ell!$ or the conductor of $J_{q^\ell r^2}$;
    \item $S \subset S'$ is the subset of $\p \in S'$ that are either above $\ell$ or split in $K/F$;
    \item $S_{\rm inert}$ is the infinite set of primes $\p$ that are inert in $K$;
    \item $\Sigma := S \cup S_{\rm inert}$.
\end{itemize}
\pagebreak

\noindent
Finally, let $\O_{F,\Sigma}^\times \subset F^\times$ be the set of $\Sigma$-units, i.e., those $x$ such that $x\in \O_{F,\p}^\times$ for all primes $\p \notin \Sigma$.

    \begin{lemma}\label{lem: Selmer rank 0}
Let $t \in \O_{F,\Sigma}^\times$ and suppose that $t \in F_\p^{\times \ell}$ for  all $\p \in S$.
        Then $\Sel_\phi(J_{q^\ell r^2t^2}) = 0$. 
    \end{lemma}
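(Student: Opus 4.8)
The plan is to compare $\Sel_\phi(J_{q^\ell r^2})$ and $\Sel_\phi(J_{q^\ell r^2 t^2})$ one prime at a time and show that the local conditions defining them coincide at every place. Since $F(\sqrt{q^\ell r^2 t^2}) = F(\sqrt{q^\ell}) = K = F(\sqrt{q^\ell r^2})$, the $\phi$-kernels $J_{q^\ell r^2 t^2}[\phi]$ and $J_{q^\ell r^2}[\phi]$ are both canonically identified with $J_{q^\ell}[\phi]$ (each is one-dimensional over $\F_\ell$ with $\Gal(\overline F/F)$ acting through the quadratic character cutting out $K/F$), so $\Sel_\phi(J_{q^\ell r^2 t^2})$ is, like $\Sel_\phi(J_{q^\ell r^2})$, a subspace of the ambient space $T = H^1(F,J_{q^\ell}[\phi])$, cut out by local subspaces $W'_\p \subseteq T_\p$. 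Writing $W_\p \subseteq T_\p$ for the local conditions of $\Sel_\phi(J_{q^\ell r^2})$, it suffices to prove $W_\p = W'_\p$ for every prime $\p$: this gives $\Sel_\phi(J_{q^\ell r^2 t^2}) = \Sel_\phi(J_{q^\ell r^2})$, which vanishes by Lemma~\ref{lem:yu}.

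For $\p \notin S$ I would argue as follows. If $\p$ is inert or ramified in $K/F$, then $\p \nmid \ell$ (every prime above $\ell$ divides $\ell!$, hence lies in $S'$, hence in $S$), and since $F(\sqrt{q^\ell r^2 t^2}) = K$, Lemma~\ref{lem: inert primes are silent} makes $\p$ silent for $J_{q^\ell r^2 t^2}$ --- and likewise for $J_{q^\ell r^2}$ --- so $W_\p = W'_\p = 0$. If instead $\p$ splits in $K/F$, then $\p \notin S'$ (a prime of $S'$ that splits in $K$ would lie in $S$), so $\p \nmid \ell$ and $J_{q^\ell r^2}$ has good reduction at $\p$; moreover $\p \notin \Sigma$, so $t \in \O_{F,\p}^\times$, whence the $\mu_\ell$-twist $J_{q^\ell r^2 t^2}$ --- which trivializes over the \emph{unramified} extension $F_\p(t^{1/\ell})/F_\p$ --- also has good reduction at $\p$. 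In that case both $W_\p$ and $W'_\p$ are the unramified subspace of $H^1(F_\p, J_{q^\ell}[\phi])$, which depends only on $J_{q^\ell}[\phi]$ as a Galois module, so $W_\p = W'_\p$.

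The remaining primes --- those in $S$, namely the primes above $\ell$ together with the bad primes of $J_{q^\ell r^2}$ that split in $K$ --- are the heart of the matter, since there is no clean description of $W_\p$ there; this is exactly where the hypothesis $t \in F_\p^{\times\ell}$ enters. For such $\p$, since $C_{q^\ell r^2 t^2}$ becomes isomorphic to $C_{q^\ell r^2}$ over any field containing an $\ell$-th root of $t$, and $t \in F_\p^{\times\ell}$, there is an isomorphism $C_{q^\ell r^2 t^2} \simeq C_{q^\ell r^2}$ over $F_\p$ compatible with the $\mu_\ell$-actions and base points; it induces a $\Z[\zeta]$-equivariant --- hence $\phi$-equivariant --- isomorphism $J_{q^\ell r^2 t^2} \simeq J_{q^\ell r^2}$ over $F_\p$, which carries $W'_\p$ onto $W_\p$ (the induced map on the $H^1$'s of the $\phi$-kernels agrees with the canonical identification of $T_\p$ up to an $\F_\ell^\times$-scalar, which preserves $\F_\ell$-subspaces). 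Having matched $W_\p = W'_\p$ everywhere, we are done. I expect the only genuinely nonroutine ingredients to be the standard fact that at a prime of good reduction away from $\ell$ the local condition is the unramified subspace and is unchanged by unramified $\mu_\ell$-twists, and the bookkeeping that slots each prime into exactly one of the above cases --- in particular, that every prime above $\ell$, where one has no control over $W_\p$, is absorbed into $S$ and so handled by the $\ell$-th-power hypothesis on $t$.
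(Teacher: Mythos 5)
Your proposal is correct and follows essentially the same route as the paper: identify both Selmer groups inside $T = H^1(F,J_{q^\ell}[\phi])$ and match local conditions prime by prime, using the $F_\p$-isomorphism of curves at $\p \in S$ (from $t \in F_\p^{\times\ell}$), silence at primes inert or ramified in $K$, and the unramified condition at the remaining primes of good reduction away from $\ell$. Your write-up merely makes explicit a few points the paper leaves implicit (that $\p\notin S$ forces $\p\nmid\ell$, and that good reduction of the twist at split $\p\notin\Sigma$ follows since $t$ is a $\p$-unit).
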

\begin{proof}
The Selmer groups $\Sel_{\phi}(J_{q^\ell r^2})$ and $\Sel_{\phi}(J_{q^\ell r^2t^2})$ live inside the common ambient space $T$; we will show that their corresponding local conditions $W_\p$ are equal inside $T_\p$ for all primes $\p$. If $\p \in S$, this is because the two curves are isomorphic over $F_\p$. Otherwise, if $\p$ is inert or ramified in $K$, then $T_\p = 0$ by Lemma \ref{lem: inert primes are silent}. In all other cases, we have $\p \nmid \ell$ and $\p$ is a prime of good reduction for $J_{q^\ell r^2}$ and  $J_{q^\ell r^2t^2}$, and thus $W_\p = H^1_{\mathrm{un}}(F_\p, J_{q^\ell}[\phi])$ for both. Thus $\Sel_{\phi}(J_{q^\ell r^2t^2}) = \Sel_{\phi}(J_{q^\ell r^2}) = 0$. 
\end{proof}

\subsection{Twists of positive rank}

We continue with the assumptions on $F$ and $K$ from \S\ref{subsec: rank 0}.  To produce rational points on certain $K$-quadratic twists $J_{r^2t^2}$, we will use solutions to the $\Sigma$-unit equation $x + 2ry = 1$. First we need to know that many such solutions exist.  For $a, b\in F^\times$, write $t_{a,b}\in F^\times / F^{\times \ell}$ for the class of $a^{\frac{\ell - 1}{2}} b$.

\begin{proposition}\label{prop: S-unit solutions}
 The set $$X:= \left\{t_{a,b}: a, b\in \O_{F, \Sigma}^\times, 1 = a + 2rb, a^{\frac{\ell - 1}{2}} b\in F_\p^{\times \ell}\text{ for all } \p\in S\right\}\subseteq F^\times / F^{\times \ell}$$ is infinite.
\end{proposition}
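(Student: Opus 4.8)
The plan is to reduce Proposition~\ref{prop: S-unit solutions} to a counting statement about solutions of the linear equation $a + 2rb = 1$ in $\Sigma$-units, and then to show that among such solutions the invariant $t_{a,b} = a^{(\ell-1)/2}b \in F^\times/F^{\times\ell}$ takes infinitely many distinct values. First I would fix the finitely many ``bad'' conditions: the requirement that $a^{(\ell-1)/2}b \in F_\p^{\times\ell}$ for all $\p \in S$ is a congruence condition modulo a fixed modulus (a product of prime powers depending only on $r$ and $S$), so it is satisfied if $a$ and $b$ are restricted to suitable fixed residue classes modulo that modulus — in particular it suffices to take $a \equiv b \equiv 1$ modulo a high enough power of each $\p \in S$, using that $\O_{F,\p}^{\times\ell}$ is open of finite index in $\O_{F,\p}^\times$. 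So it is enough to find infinitely many pairs $(a,b)$ of $\Sigma$-units in those residue classes with $a + 2rb = 1$ and with $t_{a,b}$ varying.

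The existence of infinitely many such $(a,b)$ is exactly where the additive-combinatorics input enters. The set $\Sigma$ contains the infinite set $S_{\rm inert}$ of primes inert in $K$, which has positive density, and a $\Sigma$-unit can be taken to be (a unit times) a product of such inert primes. Concretely, writing $a = u\,\pi_1$, $2rb = -w\,\pi_2$ with $\pi_1,\pi_2$ prime elements generating inert primes and $u,w$ units, the equation $a + 2rb = 1$ becomes a statement that $1$ is represented as a difference of two ``almost-prime'' (in fact prime, up to the fixed unit/modulus data) elements; equivalently one wants to solve a linear equation in two primes lying in prescribed ideal classes and congruence classes, which one handles by passing to a linear equation in \emph{three} primes (Goldbach-type) — this is the version one extracts from Mitsui~\cite{Mitsui} or from Kai~\cite{WataruKai2023}, as flagged in the introduction. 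I would invoke the $\Sigma$-unit equation result to be proved in Section~\ref{sec:Sunit} as a black box here: it guarantees infinitely many solutions $(a,b) \in (\O_{F,\Sigma}^\times)^2$ to $a + 2rb = 1$ lying in any prescribed admissible residue classes mod the fixed modulus.

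Granting infinitely many solutions $(a,b)$, the remaining point is that $t_{a,b}$ is not eventually constant in $F^\times/F^{\times\ell}$. Here I would argue by contradiction: if $t_{a,b}$ lay in a single class $c \in F^\times/F^{\times\ell}$ for all but finitely many solutions, then for each such solution the curve $C_{r^2t_{a,b}^2} \cong C_{r^2c^2}$ would be a \emph{fixed} curve, yet (as the introduction sketches) each solution $(a,b)$ produces an $F$-point on the Fermat curve $ax^\ell + 2ry^\ell = 1$ mapping to a point on $C_{r^2t_{a,b}^2}$, and these points vary; since a curve of genus $\geq 1$ has only finitely many $F$-rational points on the relevant Jacobian of bounded height this forces the $a^{(\ell-1)/2}b$ to repeat, contradicting the infinitude of distinct $\Sigma$-unit solutions. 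A cleaner route, which I would prefer, is purely arithmetic: the class of $t_{a,b}$ in $F^\times/F^{\times\ell}$ records the $\ell$-adic valuations of $a$ and $b$ at primes of $\Sigma$ modulo $\ell$, together with a bounded unit part; by choosing the inert primes $\pi_1, \pi_2$ dividing $a$ and $b$ to range over a growing set (possible by the infinitude of $S_{\rm inert}$ and the $\Sigma$-unit equation result), the support of $t_{a,b}$ is unbounded, so infinitely many distinct classes occur.

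The main obstacle is genuinely the middle step: producing infinitely many $\Sigma$-unit solutions to $a + 2rb = 1$ in prescribed congruence and ideal classes. That is a nontrivial analytic number theory statement over a general number field, and it is precisely what Section~\ref{sec:Sunit} is devoted to; everything else in this proposition — handling the finitely many local conditions at $S$, and extracting infinitely many distinct values of $t_{a,b}$ from infinitely many solutions with unbounded prime support — is bookkeeping with open subgroups of local unit groups and with valuations. So in the write-up I would state a clean lemma capturing the needed $\Sigma$-unit equation count, defer its proof to Section~\ref{sec:Sunit}, and here only carry out the reduction and the final ``unbounded support $\Rightarrow$ infinitely many classes'' argument.
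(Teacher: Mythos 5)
There is a genuine gap, and it sits exactly at the step you defer. You propose to invoke, as a black box ``from Section~\ref{sec:Sunit}'', the statement that $a+2rb=1$ has infinitely many solutions in $\Sigma$-units lying in prescribed residue/ideal classes --- but that statement \emph{is} (essentially) Proposition~\ref{prop: S-unit solutions}, the thing being proved; the only external analytic input available is a ternary theorem on prime solutions of $p_1+\beta p_2=p_3$ in prescribed congruence classes (Vinogradov--Mitsui, or Kai), i.e.\ Proposition~\ref{prop: sum of two primes}. The real content of the proof is the conversion of that three-prime statement into $\Sigma$-unit solutions, and your sketch does not supply it: writing $a=u\pi_1$, $2rb=-w\pi_2$ with $\pi_1,\pi_2$ prime makes $a+2rb=1$ a \emph{binary} Goldbach-type equation in two primes, which is not accessible; the missing trick is to take $a=p_1/p_3$ and $b=\gamma^{n\ell}p_2/p_3$, ratios of primes, so that the equation becomes the ternary one $p_1+2r\gamma^{n\ell}p_2=p_3$, and to force all three primes to be inert in $K$ (so that $a,b$ are $\Sigma$-units) by a choice of ideal class of an auxiliary prime $I$ when $K/F$ is unramified, or by congruence conditions modulo the conductor $\m$ when it is ramified --- the latter requiring a compatibility check (solvability of $t_1x^2+2r\gamma^{n\ell}t_2y^2=t_3z^2$ mod $\m$) that your sketch never confronts.

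Two further points fail as written. First, your reduction of the condition $a^{(\ell-1)/2}b\in F_\p^{\times\ell}$ for $\p\in S$ to ``take $a\equiv b\equiv 1$ modulo a high power of each $\p\in S$'' is inconsistent with the equation: $a+2rb=1$ together with $a\equiv b\equiv 1\pmod{\p^n}$ forces $2r\equiv 0\pmod{\p^n}$, impossible for large $n$ since $r$ is fixed. The paper instead puts the factor $\gamma^{n\ell}$ (with $\gamma$ divisible exactly by the primes of $S$) into $b$, so that $a=1-2rb$ is $\p$-adically close to $1$ and $a^{(\ell-1)/2}b=(1-2rb)^{(\ell-1)/2}\gamma^{n\ell}p_2/p_3$ is an $\ell$-th power at each $\p\in S$ once one also imposes $p_2\equiv p_3\pmod{\p^n}$; this same device simultaneously solves your local problem and keeps the equation ternary. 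Second, your ``unbounded support $\Rightarrow$ infinitely many classes'' step assumes you can force the auxiliary primes to range over a growing set, which the circle-method output does not directly give; the paper derives the infinite support by contradiction from the Siegel--Mahler finiteness theorem for $S$-unit equations (together with the observation that the relevant valuations are $<\ell$, so new support means new $\ell$-th power classes). Your alternative route via finiteness of rational points on a fixed curve is not valid as stated (Jacobians, and genus-one quotients, can have infinitely many rational points, and no height bound is available).
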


Granting Proposition \ref{prop: S-unit solutions}, whose proof we defer until Section \ref{sec:Sunit}, we will produce rational points on the curves $C_{r^2b^2a^{\ell-1}}$. To show that these rational points are not (usually) torsion points, we use the following lemma.

\begin{lemma}\label{lem: finitely many twists with torsion}
    Let $n \in F^\times$. There are finitely many $t \in F^\times/F^{\times\ell}$ such that $J_{nt^2}(F)_{\mathrm{tors}} \neq J_{nt^2}[\phi](F)$. 
\end{lemma}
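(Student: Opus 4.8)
The plan is to reduce the statement to the Mordell--Weil and class-group finiteness facts via a specialization/ranges argument. First I would observe that $J_{nt^2}[\phi](F) \ne 0$ exactly when $\sqrt{nt^2} \in F$, i.e. when $n$ is a square in $F$; if $n$ is not a square, then for \emph{every} $t$ we have $J_{nt^2}[\phi](F) = 0$, so we must rule out the existence of infinitely many $t$ for which $J_{nt^2}(F)$ acquires a nonzero $\phi$-torsion point, or more generally a point of order dividing some power of $\phi$ contributing to $J_{nt^2}(F)_{\mathrm{tors}}$. The cleanest route: the quadratic character cutting out $F(\sqrt{nt^2})/F = F(\sqrt n)/F$ is independent of $t$, so the Galois module $J_{nt^2}[\phi]$ is the \emph{same} module $M$ for all $t$; hence if $J_{nt^2}(F)$ contained a point of order (say) $\ell$ lying in $\phi$-torsion, that point would already be defined over $F$ and hence visible in $M(F)$, which is either all of $M$ (the square case) or $0$. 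The subtlety is that $J_{nt^2}(F)_{\mathrm{tors}}$ can contain torsion \emph{not} killed by $\phi$, so $M(F) = 0$ does not immediately give $J_{nt^2}(F)_{\mathrm{tors}} = 0$; one must control the larger torsion subgroup.

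The key step is therefore to bound the prime-to-$\ell$ torsion and the higher $\ell$-power torsion uniformly in $t$. For this I would use that $J_{nt^2}$ is a quadratic twist of $J_n$ (more precisely, $J_{nt^2}$ is an $\mu_\ell$-twist of $J_n$, becoming isomorphic over $F(t^{1/\ell})$), and that a point $P \in J_{nt^2}(F)_{\mathrm{tors}}$ of order $m$ prime to $\ell$ would, after base change to $L := F(t^{1/\ell})$, give a torsion point on $J_n(L)$; but torsion injects under reduction at a prime of good reduction away from $m$, and the residue fields are bounded independently of $t$ for primes in a fixed finite set — this forces the order $m$ of $P$ to divide a fixed integer $N = N(n)$. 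Then the full torsion subgroup $J_{nt^2}(F)_{\mathrm{tors}}$ is a subgroup of $J_{nt^2}[\ell^\infty N](\overline F)$ that is $\Gal(\overline F / F)$-stable, and the possible such subgroups are cut out by finitely many conditions on $t \bmod F^{\times \ell}$ (the relevant field extensions $F(J_n[\ell^k N])/F$ are fixed, and whether a given torsion point descends to $J_{nt^2}$ depends only on the class of $t$ modulo a subgroup of $F^\times/F^{\times\ell}$ of finite index, via the standard twisting cocycle). I expect the main obstacle to be the bookkeeping in this last reduction: making precise that ``$J_{nt^2}(F)_{\mathrm{tors}} \neq J_{nt^2}[\phi](F)$'' translates into finitely many congruence classes for $t$, which requires identifying, for each fixed finite subgroup $G \subset J_n[\ell^\infty N](\overline F)$, the twisting obstruction class in $H^1(F, \Aut)$ and checking it is a coboundary for only finitely many $t \bmod F^{\times\ell}$.

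Alternatively — and this may be the shortest write-up — one can invoke that the curves $C_{nt^2}$ for varying $t$ form a family over a base, the generic Jacobian has its torsion (over the function field) equal to $J_n[\phi]$ when $n$ is a nonsquare (an explicit computation with the hyperelliptic model $y^2 = x^\ell + nt^2$, noting that the only obvious torsion comes from the Weierstrass point $(0,\sqrt{nt^2})$ and $\mu_\ell$-orbits of points at infinity), and then appeal to the fact that for a nonisotrivial family, only finitely many fibers can have torsion exceeding the generic torsion — this is a standard consequence of the Néron model and the fact that torsion specializes injectively, combined with the Shafarevich-type finiteness that the family has only finitely many fibers over any fixed number field with a given bounded torsion structure. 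If $n$ \emph{is} a square, one argues symmetrically: then $J_{nt^2}[\phi](F) = J_{nt^2}[\phi]$ for all $t$, and one must show $J_{nt^2}(F)_{\mathrm{tors}}$ exceeds this for only finitely many $t$, which again follows from generic-torsion-plus-finiteness. In either case the real content is the elementary observation that the torsion is governed by a fixed set of auxiliary extensions of $F$, so that ``extra torsion'' is a Zariski-closed (indeed finite) condition on the twisting parameter $t \in \mathbb{P}(F^\times/F^{\times\ell})$; everything else is the Weil-restriction-of-scalars and good-reduction-injectivity machinery already implicit in the definitions of Section~\ref{sec: Selmer}.
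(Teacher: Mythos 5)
Your opening reduction and your uniform\hyp{}boundedness step are essentially workable, and the latter is a legitimate substitute for the paper's: the paper bounds $\#J_{nt^2}(F)_{\mathrm{tors}}$ independently of $t$ using CM (hence everywhere potentially good reduction) plus Weil bounds at two residue characteristics, whereas you pass to $L=F(t^{1/\ell})$, where $J_{nt^2}\simeq J_n$, and reduce at fixed primes of good reduction of $J_n$, whose residue fields in $L$ have size at most $\Nm(\p)^\ell$ independently of $t$. However, as written you apply this only to orders prime to $\ell$ and then carry along $J_{nt^2}[\ell^\infty N]$; since the level $\ell^k$ is unbounded, your assertion that ``the relevant field extensions $F(J_n[\ell^k N])/F$ are fixed'' is false (there are infinitely many of them, of unbounded degree). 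This part is repairable: run the same reduction-injectivity argument at a fixed good prime of residue characteristic different from $\ell$ to bound the $\ell$-primary part as well, so that one integer $N$ kills $J_{nt^2}(F)_{\mathrm{tors}}$ for every $t$.

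The genuine gap is the finiteness in $t$, which you defer as ``the main obstacle'' and, worse, frame via a mechanism that cannot work: you propose that the extra-torsion condition confines $t$ to finitely many cosets of a finite-index subgroup of $F^\times/F^{\times\ell}$. But $F^\times/F^{\times\ell}$ is an infinite-dimensional $\F_\ell$-vector space, so any finite-index subgroup, and each of its cosets, contains infinitely many classes; no statement of that shape can yield ``finitely many $t$.'' The missing argument — the actual content of the lemma — is representation-theoretic: with $N$ fixed, $J_{nt^2}[N]\simeq J_n[N]\otimes\chi_t^{-1}$ as $\Z[\zeta][\Gal(\overline{F}/F)]$-modules, where $\chi_t(\sigma)=(\sqrt[\ell]{t^2})^\sigma/\sqrt[\ell]{t^2}$. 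An $F$-rational point of $J_{nt^2}[N]$ not killed by $\phi$ corresponds to $Q\in J_n[N]$ with $\sigma(Q)=\chi_t(\sigma)Q$, and its $\Z[\zeta]$-span is a Galois-stable submodule on which Galois acts through $\chi_t$; hence $\chi_t$ occurs in the \emph{fixed finite} module $J_n[N]$. A finite module admits only finitely many such characters, and $t\mapsto\chi_t$ is injective on $F^\times/F^{\times\ell}$ (Kummer theory; squaring is a bijection as $\ell$ is odd), so only finitely many classes $t$ can carry extra torsion. Your fallback sketch in the last paragraph does not rescue this: ``only finitely many fibers have torsion exceeding the generic torsion'' is not a standard consequence of N\'eron models — specialization injectivity bounds generic torsion by special torsion, not the reverse — and Shafarevich-type finiteness does not apply because the bad-reduction locus of $J_{nt^2}$ varies with $t$; ruling out infinitely many twists acquiring a rational point of some fixed order is exactly what the character argument above is needed for.
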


\begin{proof}
A version of this lemma holds for the twists of any fixed abelian variety, but the proof in this case is simplified by observing that the Jacobians $J_{nt^2}$ have complex multiplication, hence have everywhere potentially good reduction \cite[\S5]{SerreTate}.  It follows that there is a uniform upper bound (depending only on $\ell$ and $F$) on the order $\#J_{nt^2}(F)_{\mathrm{tors}}$ of the torsion subgroup. Indeed, if $\p$ is a place of $F$ above a rational prime $p$, then we may pass to a totally ramified extension of $F_\p$ over which $J_{nt^2}$ attains good reduction \cite[p.\ 498]{SerreTate}. We then use the Weil bound over $\O_F/\p$ to bound the prime-to-$p$ part of $J_{nt^2}(F)_{\mathrm{tors}}$. Applying this to two different primes $p$ gives a bound on $\#J_{nt^2}(F)_{\mathrm{tors}}$. 
Thus, we may choose an integer $N$ which is a multiple of $\#J_{nt^2}(F)_{\mathrm{tors}}$, for all $t \in F^\times$. Let $\chi_t \colon \Gal(\overline{F}/F) \to \mu_\ell$ be the character $\sigma \mapsto (\sqrt[\ell]{t^2})^\sigma/\sqrt[\ell]{t^2}$,
so that there is an isomorphism of $\Z[\zeta][\Gal(\overline{F}/F)]$-modules $J_{nt^2}[N] \simeq J_n[N] \otimes \chi_t^{-1}$. We see that if $J_{nt^2}[N](F)$ contains a point which is not killed by $1 - \zeta$, then the character $\chi_t$ is a subrepresentation of $J_n[N]$. This shows that there are only finitely many $t \in F^\times/F^{\times \ell}$ such that $J_{nt^2}(F)_{\mathrm{tors}} \neq J_{nt^2}[\phi](F)$.  
\end{proof}

Over $\overline{\Q}$, the curves $C_n$ are $\mu_\ell$-quotients of the $\ell$-th Fermat curve $x^\ell + y^\ell = z^\ell$. We use a twisted version of this covering to produce rational points on $C_{r^2b^2a^{\ell-1}}$.

\begin{proposition}\label{prop: positive rank}
    For all but finitely many $t_{a,b} \in X$, the Jacobian $J_{r^2a^{\ell-1}b^2}$ has positive rank.
\end{proposition}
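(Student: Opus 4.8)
The plan is to exploit the twisted Fermat cover mentioned just before the statement. Given $t_{a,b} \in X$, we have $1 = a + 2rb$ with $a,b \in \O_{F,\Sigma}^\times$, and I want to produce an $F$-rational point on $C_{r^2a^{\ell-1}b^2}$ that is generically non-torsion. First I would write down explicitly the map from the twisted Fermat curve $\mathcal{F}_{a,b} \colon aX^\ell + 2rbY^\ell = Z^\ell$ to $C_{r^2a^{\ell-1}b^2}$. Over $\overline{F}$ this is the usual quotient of the Fermat curve by a $\mu_\ell$-action, so it descends to an $F$-morphism between these twists; concretely, one checks that $(X,Y,Z) \mapsto (x,y) := \big(a^{(\ell-1)/\ell}(X/Z)\cdot(\text{suitable scaling}),\, \dots\big)$ lands on $y^2 = x^\ell + r^2a^{\ell-1}b^2$ after the correct normalization — I would just verify the polynomial identity $a \cdot (aX^\ell) = (a^{(\ell-1)/\ell}X)^\ell \cdot (\text{const})$-type relation so that $aX^\ell \cdot (2rbY^\ell)$ becomes a perfect $\ell$-th power times $r^2a^{\ell-1}b^2$, exhibiting the $y^2 = x^\ell + n$ shape. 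The point $[1:1:1] \in \mathcal{F}_{a,b}(F)$ (which exists precisely because $a + 2rb = 1$) then maps to an explicit $F$-rational point $P_{a,b} \in C_{r^2a^{\ell-1}b^2}(F) \subseteq J_{r^2a^{\ell-1}b^2}(F)$.

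Next, I would invoke Lemma~\ref{lem: finitely many twists with torsion} with $n = r^2$ (or with a fixed representative, absorbing the $r^2$), which tells me that for all but finitely many classes $t \in F^\times/F^{\times\ell}$, the torsion subgroup of $J_{r^2t^2}(F)$ is exactly $J_{r^2t^2}[\phi](F)$. Since $t_{a,b}$ ranges over the infinite set $X$ by Proposition~\ref{prop: S-unit solutions}, for all but finitely many $t_{a,b} \in X$ we have $J_{r^2a^{\ell-1}b^2}(F)_{\mathrm{tors}} = J_{r^2a^{\ell-1}b^2}[\phi](F)$. Moreover, since the relevant twists have $\sqrt{r^2a^{\ell-1}b^2}\in F$ only when $a^{\ell-1}$ is a square, i.e.\ always (as $\ell-1$ is even!), I need to be careful: $r^2a^{\ell-1}b^2 = (ra^{(\ell-1)/2}b)^2$ is a square in $F$, so $F(\sqrt{r^2a^{\ell-1}b^2}) = F$ and $J_{r^2a^{\ell-1}b^2}[\phi](F)$ could be all of $\F_\ell$. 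So the torsion could be as large as order $\ell$. The key point is then that $P_{a,b}$ must be shown to lie outside this bounded torsion subgroup for all but finitely many $t_{a,b}$.

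To rule out $P_{a,b}$ being torsion, I would argue as follows. If $P_{a,b}$ were torsion for infinitely many $t_{a,b} \in X$, then (using the uniform bound $N$ on $\#J_{\bullet}(F)_{\mathrm{tors}}$ from the proof of Lemma~\ref{lem: finitely many twists with torsion}) infinitely many of these points would be killed by the same integer $N$, hence would be $N$-torsion. But the curves $C_{r^2a^{\ell-1}b^2}$ for varying $(a,b)$ are quadratic/$\mu_\ell$-twists of one another, all becoming isomorphic over a common field; pulling back to the Fermat curve, the point $[1:1:1]$ is a \emph{fixed} point on the \emph{fixed} curve $\mathcal{F}\colon X^\ell+Y^\ell=Z^\ell$ (after rescaling coordinates by $a^{1/\ell}$ and $(2rb)^{1/\ell}$, which is where the twisting enters), and its image in the Jacobian $\Jac(\mathcal{F})$ is a specific non-torsion point (the class $[1:1:1] - [\zeta^i:1:0]$-type divisor, non-torsion since the Fermat Jacobian's Mordell--Weil group is understood, or more simply since a torsion point would force a relation that fails). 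I would make this precise by a specialization/monodromy argument: the section $t_{a,b} \mapsto P_{a,b}$ defines a non-torsion section of the family $J_{r^2t^2}$ over (an open subset of) the parameter space, and a non-torsion section can specialize to torsion only on a thin set — but since $X$ is infinite and the torsion is uniformly bounded, all but finitely many $t_{a,b} \in X$ give a non-torsion $P_{a,b}$.

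The main obstacle I expect is this last step: ruling out that $P_{a,b}$ is torsion for all but finitely many $t_{a,b}$. The uniform torsion bound from Lemma~\ref{lem: finitely many twists with torsion} reduces this to showing the corresponding divisor class on the common (Fermat) model is non-torsion, but one must be careful that the identification of the $P_{a,b}$'s across different twists is compatible with a single geometric point — the twisting by $a$ and $2rb$ means the point $[1:1:1]$ on $\mathcal{F}_{a,b}$ corresponds to the point $[a^{-1/\ell}:(2rb)^{-1/\ell}:1]$ on the standard Fermat curve over $\overline{F}$, which moves as $(a,b)$ varies. So the cleanest route is probably: after the cover, the point lies on $\mathcal{F}_{a,b}$, and $\mathcal{F}_{a,b}$ is a twist of $\mathcal{F}$ whose Jacobian is a twist of $\Jac(\mathcal{F})$; the image point in $\Jac(\mathcal{F}_{a,b})(F)$ is non-torsion unless a bounded-order relation holds, and checking that this relation fails amounts to a direct computation on $C_n$ showing $P_{a,b} \notin J_{r^2a^{\ell-1}b^2}[\phi](F)$ by examining, e.g., the $x$-coordinate or a height/reduction argument at a well-chosen prime in $\Sigma$. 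I would close by combining: all but finitely many $t_{a,b} \in X$ give $J_{r^2a^{\ell-1}b^2}(F)_{\mathrm{tors}} = J_{r^2a^{\ell-1}b^2}[\phi](F)$ and $P_{a,b} \notin J_{r^2a^{\ell-1}b^2}[\phi](F)$, hence $\rank J_{r^2a^{\ell-1}b^2}(F) > 0$.
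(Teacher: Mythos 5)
Your overall strategy is the same as the paper's: map the twisted Fermat curve $ax^\ell + 2rby^\ell = z^\ell$ (which has the rational point $(1,1,1)$ because $a+2rb=1$) down to $C_{r^2a^{\ell-1}b^2}$, and use Lemma~\ref{lem: finitely many twists with torsion} to reduce to showing the resulting point is not in the (uniformly bounded) torsion. But the final and essential step --- proving the point is actually non-torsion --- is left as an acknowledged ``main obstacle'' with several tentative strategies rather than an argument, and that is a genuine gap. The specialization/monodromy route you sketch does not work as stated: the points do not form a section of the family $J_{r^2t^2}$ over the $t$-line (the point depends on the pair $(a,b)$, not only on the class $t_{a,b}$), and even after reparametrizing by $b$ along the line $a+2rb=1$ you would still have to prove that the generic section is non-torsion, which you only assert via unverified facts about the Fermat Jacobian. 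Moreover ``specializes to torsion only on a thin set'' is not the relevant statement; one needs finiteness (bounded height, \`a la Silverman), and in any case this machinery is unnecessary.

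The paper closes the argument in one line, and it is exactly the option you mention in passing but do not execute: the explicit cover $\pi(x,y) = (axy^{-1}, a^{\frac{\ell-1}{2}}(y^{-\ell}-rb))$ sends $(1,1)$ to $P = (a,\,a^{\frac{\ell-1}{2}}(1-rb)) \in C_{r^2a^{\ell-1}b^2}(F)$, whose $x$-coordinate is $a \neq 0$. Once Lemma~\ref{lem: finitely many twists with torsion} guarantees $J_{r^2a^{\ell-1}b^2}(F)_{\mathrm{tors}} = J_{r^2a^{\ell-1}b^2}[\phi](F)$ (which, as you correctly note, can have order $\ell$ since $r^2a^{\ell-1}b^2$ is a square), it suffices to observe that the only points of the curve whose classes $P-\infty$ lie in $J[\phi]$ are $\infty$ and the points with $x=0$: if $\phi(P-\infty)=0$ then $(\zeta x_P, y_P) \sim (x_P,y_P)$ as degree-one divisors, and on a curve of positive genus two distinct points are never linearly equivalent, forcing $\zeta x_P = x_P$, i.e.\ $x_P=0$ or $P=\infty$. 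Since $x(P)=a\neq 0$, the point $P-\infty$ is non-torsion and the rank is positive. Without this (or some completed substitute) your proposal does not yet prove the proposition; with it, your argument coincides with the paper's.
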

\begin{proof}
For all but finitely many $t_{a,b} \in X$, we have $J_{r^2a^{\ell-1}b^2}(F)_{\mathrm{tors}} = J_{r^2a^{\ell-1}b^2}[\phi]$, by Lemma \ref{lem: finitely many twists with torsion}. Suppose $(a, b)$ is such a pair. 
Let $\widetilde{C} \colon ax^\ell + 2rby^\ell = z^\ell$ be the twisted Fermat curve and consider the cover $\pi \colon \widetilde{C} \to C_{r^2a^{\ell-1}b^2}$ given on the affine patch where $z = 1$ by \[\pi(x,y) = (axy^{-1}, a^{\frac{\ell-1}{2}}(y^{-\ell} - rb)).\] 
Since $1 = a+2rb$, the point $P = (a, a^{\frac{\ell-1}{2}}(1-rb)) = \pi(1,1)$ lies in $C_{r^2a^{\ell-1}b^2}(F)$. Since $a \neq 0$, this is not a torsion point, and therefore $J_{r^2a^{\ell-1}b^2}$ has positive rank. 
\end{proof}

\section{Proof of Proposition \ref{prop: S-unit solutions}} \label{sec:Sunit}

In this section, we give a proof of Proposition \ref{prop: S-unit solutions}.  The argument is standard, but for convenience in citing the literature, we prove a much stronger result.

\begin{proposition}\label{prop: sum of two primes}
Let $F$ be a totally complex number field. Let $I\subseteq \O_F$ be an ideal. Let $C\in \Z^+$ be prime to $I$. Let $\beta \in 2\O_F$ be nonzero. Let $u_1, u_2, u_3\in \left(\O_F / (C)\right)^\times$ be such that $u_1 + \beta u_2\equiv u_3\pmods{C}$. Then there exist infinitely many triples $(p_1, p_2, p_3)\pmods{\O_F^\times}$ such that:
\begin{itemize}
\item $p_i\in I$ with $\Nm{(p_i)} / \Nm{(I)}$ prime;
\item $p_1 + \beta p_2 = p_3$; and
\item $p_i\equiv u_i\pmods{C}$.
\end{itemize}
\end{proposition}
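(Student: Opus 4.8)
The plan is to prove Proposition~\ref{prop: sum of two primes} by the Hardy--Littlewood circle method over the number field $F$, in the classical Vinogradov--Mitsui style, counting solutions weighted by a von Mangoldt function on the ring of integers. Concretely, I would fix a large parameter $N$ and count, with suitable smooth weights supported on elements of norm roughly $N$ lying in a fixed fundamental domain for the $\O_F^\times$-action, the number of triples $(p_1,p_2,p_3)$ of prime elements (generators of prime ideals, after clearing the fixed ideal $I$) satisfying the linear equation $p_1 + \beta p_2 = p_3$ together with the congruence $p_i \equiv u_i \pmod{C}$. The congruence and ideal conditions are handled by restricting each variable to a fixed residue class modulo $CI$; since the hypothesis $u_1 + \beta u_2 \equiv u_3 \pmod C$ guarantees the congruence system is consistent, this merely changes the singular series by a nonzero local factor rather than killing it.

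The execution proceeds in the usual three steps. First, set up the exponential sums: embed $\O_F$ into $F\otimes\R \cong \R^{r_2}\times\C^{r_2}$ (here $F$ is totally complex, so $F\otimes\R\cong\C^{r_1}$ with $r_1 = [F:\Q]/2$ real-dimension $[F:\Q]$), choose an additive character on the adelic torus via the trace form, and write the counting function as an integral over a torus $(\R/\Z)^{[F:\Q]}$ of a product of three prime-detecting exponential sums $S_j(\theta) = \sum_{\mathfrak p} (\text{weight}) \, e(\Tr(\xi_j \theta))$. Second, do the major-arc analysis: on arcs near rationals with small denominator, Siegel--Walfisz for Hecke $L$-functions (equivalently, the prime ideal theorem in arithmetic progressions, as in Mitsui~\cite{Mitsui, mitsui-siegel-walfisz} or Kai~\cite{WataruKai2023}) gives the expected main term, a positive constant times $N^{2[F:\Q]-\text{something}}/(\log N)^3$ times a singular series which is an absolutely convergent Euler product; the local factors at primes dividing $C\beta I$ are exactly the ones forced by our congruence hypotheses and are nonzero because the congruences are solvable in units mod $C$, and the factors at all other primes are bounded away from $0$ and $1$, so the singular series is positive. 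Third, bound the minor arcs: Vinogradov's bilinear (Vaughan-identity) estimate for $\sum_{\mathfrak p} e(\Tr(\xi\theta))$ over a number field, again available from Mitsui or Kai, shows $\sup_{\text{minor}} |S_1(\theta)| = o(N^{[F:\Q]}/(\log N)^{A})$ for any $A$, and combining this with an $L^2$ bound (Parseval/Plancherel) on $S_2, S_3$ controls the minor-arc contribution. The main term dominates, so the number of solutions tends to infinity; discarding the (negligibly many) solutions with some $p_i$ a unit times a non-prime or with $p_1, p_2$ associate gives infinitely many genuinely distinct triples $(p_1,p_2,p_3) \pmod{\O_F^\times}$.

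Alternatively — and this is what I would actually write, since it is shorter and the paper explicitly invokes it — I would simply cite the work of Kai~\cite{WataruKai2023} (or Mitsui~\cite{Mitsui}) on linear equations in primes over number fields essentially as a black box: these results give an asymptotic for the number of prime solutions to a linear system with a positive singular series, and one only needs to check that the singular series of the specific system $x_1 + \beta x_2 - x_3 = 0$ with the imposed congruences modulo $C$ and divisibility by $I$ does not vanish. Verifying non-vanishing of the singular series is the one genuinely necessary computation: the archimedean factor is a volume of a nonempty open region (the equation $\xi_1 + \beta\xi_2 = \xi_3$ cuts out a hyperplane meeting the positive-norm-box in an open set, because $\beta \in 2\O_F$ is nonzero), and each non-archimedean factor is a count of solutions mod $\mathfrak p^k$ normalized by $\Nm(\mathfrak p)^{2k}$, which is positive precisely when the system is solvable mod $\mathfrak p^k$ — and at the bad primes this solvability is exactly our hypothesis $u_1 + \beta u_2 \equiv u_3 \pmod C$, while at good primes it is automatic. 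Finally I would reduce modulo $\O_F^\times$: the asymptotic count of solutions in a box of norm $\le N$ grows like a positive power of $N$ times $(\log N)^{-3}$, whereas each $\O_F^\times$-orbit meets the box in $O((\log N)^{r-1})$ elements with $r = \mathrm{rank}\,\O_F^\times$ bounded, so infinitely many orbits occur.

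The main obstacle is not conceptual but bookkeeping: making sure the congruence/ideal conditions are threaded correctly through whichever citation is used so that the relevant singular-series local factors are the nonzero ones — i.e., that the hypothesis $C$ prime to $I$, $\beta \in 2\O_F\setminus\{0\}$, and $u_1 + \beta u_2 \equiv u_3 \pmod C$ together imply both archimedean positivity and $\mathfrak p$-adic solvability at every prime. The factor of $\beta$ being in $2\O_F$ (hence the ``$2r$'' in the application) is precisely to avoid a parity obstruction at the prime $2$: one wants the local densities at $2 \mid \beta$ to be positive, which the congruence hypothesis mod $C$ — when $C$ is taken divisible by a high power of $2$ — handles. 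Once the singular series is seen to be positive, the rest is a direct appeal to the cited circle-method theorem plus the elementary orbit-counting reduction mod units.
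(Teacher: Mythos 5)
Your proposal matches the paper's proof essentially verbatim: the paper likewise sketches the Vinogradov--van der Corput--Mitsui circle method (major arcs via Mitsui's Siegel--Walfisz, minor arcs via Vaughan plus Cauchy--Schwarz, singular series positive by Hensel lifting and the congruence hypothesis, with $\beta\in 2\O_F$ removing the obstruction at primes above $2$), and it also offers your same alternative of invoking Kai's number-field Green--Tao--Ziegler theorem as a black box after checking the local conditions. So the approach is correct and the same as the paper's.
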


\begin{proof}
This is a standard modification of Vinogradov's $1937$ circle method argument solving ternary Goldbach for sufficiently large integers \cite{vinogradov}, which was subsequently applied to three-term homogeneous linear equations in the primes by van der Corput \cite{vanderCorput}. The generalization of the method to number fields is due to Mitsui \cite{Mitsui}.

We briefly sketch the argument here. One first reduces to needing to estimate the following sum of trigonometric integrals, which counts the number of solutions to $p_1 + \beta p_2 = p_3$ of bounded height:
\begin{align*}
\frac{1}{\left(\Nm_{F/\Q}\,{C}\right)^3}\sum_{\alpha_1, \alpha_2, \alpha_3\in \O_F/(C)} &e\left(-\frac{\tr_{F/\Q}\left(\alpha_1 u_1 + \alpha_2 u_2 + \alpha_3 u_3\right)}{C}\right)\\& \int_{\xi\in \left(\mathfrak{d}_{F/\Q} I\right)^{-1}\otimes_\Z \R/\Z} d\xi\, S_N\left(\xi + \frac{\alpha_1}{C}\right) S_N\left(\beta \xi + \frac{\alpha_2}{C}\right) S_N\left(-\xi + \frac{\alpha_3}{C}\right),
\end{align*}\noindent
where
\begin{align*}
S_N(z) := \sum_{p\in I : ||p||_\infty\leq N} \Lambda_F\left((p) I^{-1}\right) e\left(\tr_{F/\Q}\left(z p\right)\right),
\end{align*}
and $\Lambda_F := \mu_F * \log$ is the von Mangoldt function of $F/\Q$, and $||x||_\infty := \max_{v\mid \infty} |x|_v$.

This may be broken up into major and minor arcs in the standard way (with the same height cutoff as in Vinogradov, up to implicit constants), as in \cite{Mitsui}. To treat the integral over the major arcs and compute the singular series, we apply Mitsui's number field generalization~\cite{mitsui-siegel-walfisz} of the Siegel-Walfisz theorem. For the integral over the minor arcs, we use the Vaughan identity and repeated applications of Cauchy-Schwarz (both for the integral of the remaining two-variable exponential sum over the minor arcs as well as for the 
Type II sum arising from Vaughan), as in \cite{Mitsui}. The additive character modulo~$C$ serves to translate the dual vector and thus only affects implicit constants, e.g., in the geometric series estimate on the minor arcs. The finite part of the singular series is (ignoring small primes)
$$\prod_{\p} \lim_{n\to \infty} \Nm(\p) (\Nm(\p) - 1)^{-3}  \#\{(p_1, p_2) : p_1, p_2, p_1 + \beta p_2\in (\O_F/\p^n)^\times\},$$
which is positive by Hensel lifting and the given hypotheses, yielding the desired result.

The proposition can also be deduced from a recent and far more general result of Kai~\cite{WataruKai2023}, which generalizes to number fields the Green--Tao--Ziegler theorem on simultaneous prime values of linear forms \cite{GreenTaoZiegler}. For this, let $W \subset I^2$ be the subgroup of $(x,y) \in I^2$ such that $x \equiv u_1$, $y \equiv u_2$, and $x + \beta y \equiv u_3$ modulo~$C$. Define linear maps  $\psi_i \colon W \to I$ by $\psi_1(x,y) = x$, $\psi_2(x,y) = y$, and $\psi_3(x,y) = x + \beta y$. By \cite[Theorem 13.1]{WataruKai2023}, there are infinitely many pairs   $(p_1,p_2) \in W$ such that the triple $(p_1,p_2,p_1 + \beta p_2)$ satisfies the three conditions of the proposition.
\end{proof}

\begin{proof}[Proof of Proposition $\ref{prop: S-unit solutions}$]

Let $\m$ be the conductor of $K/F$, so that $\m$ is divisible by a prime $\p$ if and only if $\p$ ramifies in $K/F$.  The extension $K/F$ corresponds via class field theory to a surjection $\psi \colon \Cl_F(\m) \to \Z/2\Z$ from the ray class group $\Cl_F(\m)$ of conductor $\m$.  A prime $\p \nmid \Disc(K/F)$  splits in $K$ if and only if  $[\p] \in \ker(\psi)$. 
 
 First suppose that $\m = 1$, so that $K/F$ is everywhere unramified. Let $I$ be a prime ideal (coprime to $S$ and $r\m$) such that $[I] \notin\ker(\psi)$. We say that $p \in I$ is an $I$-prime if $\Nm(p)/\Nm(I)$ is prime. Note that any $I$-prime $p$ is an $S_{\rm inert}$-unit, and in particular a $\Sigma$-unit. Let $\gamma$ be any element of $\O_F$ such that $\gamma \in \p$ if and only if $\p \in S$.
 We will apply Proposition \ref{prop: sum of two primes} to the element $\beta = 2r\gamma^{n\ell}$ (for some large integer $n$ to be chosen below as needed) and the ideal $I$. This gives us $I$-primes $p_1,p_2,p_3$ such that $p_1 + 2r\gamma^{n\ell} p_2 = p_3$. We may and will assume that $p_2 \equiv p_3 \pmod{\p^n}$, for all $\p \in S$. Now take $a = p_1/p_3$ and $b = \gamma^{n\ell} p_2/p_3$. These are $\Sigma$-units and they satisfy $a + 2rb = 1$ by construction. It remains to check that 
 \[a^{\frac{\ell-1}{2}}b = (1-2rb)^{\frac{\ell-1}{2}} \gamma^{n \ell} p_2/p_3\]
 lies in $F_\p^{\times\ell}$ for all $\p \in S$. Since we have chosen $n$ large, the element $1 - 2rb$ is $\p$-adically close to $1$ and hence is an $\ell$-th power. Similarly, since $n$ is large and  $p_2/p_3 \equiv 1 \pmod{\p^n}$, the unit $p_2/p_3$ is an $\ell$-th power in $F_\p^\times$. Thus $a^{\frac{\ell-1}{2}}b$ is indeed an $\ell$-th power in $F_\p^\times$, completing the proof in this case.

 We now consider the case where $\m \neq 1$, i.e., where $K/F$ is not a subfield of the Hilbert class field~$H/F$. Applying Chebotarev's Theorem to the compositum $HK/F$, we see that there exist prime elements $p_i$ of $\O_F$ that are inert in $K$. Indeed, the restriction of $\psi$ to $\Gal(H_\m/H) \simeq (\O_F/\m)^\times/\pi_{\m}(\O_F^\times)$ has kernel an index $2$ subgroup $G_0$, and the principal primes~$p_i$ that are inert in $K/F$ are those whose images modulo $\m$ do not lie in $G_0$. This amounts to a squareclass condition on the primes~$p_i$ modulo each prime $\p \mid \m$. We now apply Proposition \ref{prop: sum of two primes} to the element $\beta = 2r\gamma^{n\ell}$ and the ideal $I = \O_F$. We define $a = p_1 / p_3$ and $b = \gamma^{n\ell} p_2 / p_3$ and assume that $p_2 \equiv p_3 \pmod{\p^n}$ for all $\p \in S$. Additionally, we prescribe the squareclasses of the $p_i$ modulo each prime dividing $\m$ so that the $p_i$ are inert in $K/F$. Since $K/F$ is unramified at primes above $\ell$, the sets $S$ and $\{\p \colon \p \mid \m\}$ are disjoint, so the new congruence conditions will not contradict the previous ones. We need only check that the equation $$t_1x^2 + 2r\gamma^{n\ell} t_2y^2 = t_3 z^2$$ has a solution modulo $\m$ for some choice of units $t_1,t_2,t_3$ modulo $\m$. Since we are free to assume $t_1 = t_3$, there is no local obstruction at $2$-adic primes $\p \mid \m$. For other primes $\p \mid \m$, the equation defines a smooth conic modulo $\p$ and hence is unobstructed as well.      

 As for infinitude, for each triple $(p_1, p_2, p_3)$ produced by the above construction, the ideal $$\mathfrak{t}_{p_1, p_2, p_3} := \left(p_1^{\frac{\ell - 1}{2}} p_2 \:\; p_3^{\frac{\ell - 1}{2}} \right)$$ has large prime factors occurring with multiplicity strictly smaller than $\ell$, and moreover the collection of such ideals produced by said construction has infinite support, else we would be producing infinitely many solutions to an ${\mathcal{S}}$-unit equation for a finite set ${\mathcal{S}}$ of primes, contradicting the theorem of Siegel and Mahler. It follows that the set of $\ell$-th power classes of the ideals $$\left(a^{\frac{\ell - 1}{2}} b \right) = \mathfrak{t}_{p_1, p_2, p_3} (\gamma^n p_3^{-1})^\ell$$ is infinite, whence the set of $\ell$-th power classes of the elements $a^{\frac{\ell - 1}{2}} b$ is infinite as well.
\end{proof}

\section{Proof of the main theorem (Theorem~$\ref{thm: main}$)}\label{sec: final}

In this section, for any quadratic extension $K/F$ of number fields, we construct an abelian variety $A/F$ with the property that $\rank A(F) =\rank A(K)>0$.

Let $\ell$ be an odd prime not dividing the discriminant of $K$. Then $K$ and the $\ell$-th cyclotomic field $\Q(\zeta_\ell)$ are linearly disjoint inside a common algebraic closure. By the Weil restriction construction, it is enough to prove the theorem for the quadratic extension $K(\zeta_\ell)/F(\zeta_\ell)$. Hence we may assume that $\zeta_\ell \in F$ and $K/F$ is unramified at primes above $\ell$, i.e., we are in the setting of \S\ref{subsec: rank 0}.  

Let $K = F(\sqrt{q})$, and choose $r$ as in Lemma \ref{lem:yu}. Then choose $a,b$ as in Proposition \ref{prop: positive rank}, so that  $\rank J_{r^2a^{\ell-1}b^2}(F) > 0$. Lemmas~\ref{lem: Selmer 0 implies rank 0} and \ref{lem: Selmer rank 0}  imply that $J_{q^\ell r^2a^{\ell-1}b^2}(F)$ has rank~$0$. It follows that 
\[\rank J_{r^2a^{\ell-1}b^2}(K) = \rank J_{r^2a^{\ell-1}b^2}(F) + \rank J_{q^\ell r^2a^{\ell-1}b^2}(F) = \rank J_{r^2a^{\ell-1}b^2}(F) > 0,\]
so that $A := J_{r^2a^{\ell-1}b^2}$ over $F$ is the sought-after abelian variety.

\vspace{.095in}
We have proven Theorem \ref{thm: main}, and therefore Corollary~\ref{cor: main} also  follows.

 \subsection*{Acknowledgments} We thank Tim Dokchitser, Vladimir Dokchitser, Peter Koymans, Jef Laga, Robert Lemke Oliver, Barry Mazur, Adam Morgan, Carlo Pagano, Hector Pasten, Bjorn Poonen, Arul Shankar, and Alexandra Shlapentokh for helpful conversations and comments. LA was supported by NSF DMS-2002109 and the Society of Fellows. MB was partially supported by a Simons Investigator Grant and NSF DMS-1001828. WH was partially supported by NSF DMS-2309115, the Minerva Research Foundation, and a grant from the Institute for Advanced Study.  AS was partially funded by the European Research Council (ERC, CurveArithmetic, 101078157), as well as the Ambrose Monell Foundation while at the Institute of Advanced Study.

\bibliographystyle{amsalpha}
\bibliography{references}

\end{document}